\def\ps@pprintTitle{%
	\let\@oddhead\@empty
	\let\@evenhead\@empty
	\def\@oddfoot{\reset@font\hfil\thepage\hfil}
	\let\@evenfoot\@oddfoot
}
\newtheorem{thm}{Theorem}[section]
\newtheorem{lem}{Lemma}[section]
\newtheorem{cor}{Corollary}[section]
\newtheorem{prop}{Proposition}[section]
\theoremstyle{definition}
\newtheorem{defn}{Definition}[section]
\newtheorem{ex}{Example}[section]
\theoremstyle{remark}
\theoremstyle{properties}
\theoremstyle{Examples}
\numberwithin{equation}{section}
\begin{document}
\begin{frontmatter}
	\title{\textbf{Fractional Cumulative Residual Entropy \\in the Quantile Framework and its Applications \\in the Financial Data}}
	\author{Iona Ann Sebastian \orcidlink{0009-0000-7640-2215}\corref{cor1}, S.M.Sunoj \orcidlink{0000-0002-6227-1506}\corref{cor1}}
	\ead{ionaann99@gmail.com, smsunoj@cusat.ac.in}
	\cortext[cor1]{Corresponding author}
	
	\address{Department of Statistics\\Cochin University of Science and Technology\\Cochin 682 022, Kerala, India}

	\begin{abstract}
	%% Text of abstract
	Fractional cumulative residual entropy (FCRE) is a powerful tool for the analysis of complex systems. Most of the theoretical results and applications related to the FCRE of the lifetime random variable are based on the distribution function approach. However, there are situations in which the distribution function may not be available in explicit form but has a closed-form quantile function (QF), an alternative method of representing a probability distribution.  Motivated by this, in the present study we introduce a quantile-based FCRE, its dynamic version and their various properties and examine their usefulness in different applied fields. 
	\end{abstract}
	
	\begin{keyword}
	%% keywords here, in the form: keyword \sep keyword
	Fractional cumulative entropy \sep quantile function \sep  hazard rate \sep nonparametric estimation.
	%\JEL D31 \sep D33 \sep I32
	\MSC[2020] 94A17
	\end{keyword}
	
	%\MSC[2010] 94A17 \sep 62G07
	
\end{frontmatter}

%......................................................
\section{Introduction}
It was Rudolph Clausius, a physicist and a mathematician, who coined the term entropy in the context of identifying the lost energy in the thermodynamic system. However, the definition, interpretation, and properties of entropy as a measure of uncertainty that formed the basis of information theory was due to \citet{shannon1948mathematical}, defined for a discrete random variable (rv).  The notion of entropy corresponding to a continuous rv is known as differential entropy.  When $X$ is a non-negative absolutely continuous rv with cumulative distribution function (cdf) $F(\cdot)$ and probability density function (pdf) $f(\cdot)$, then Shannon differential entropy is defined as
\begin{equation*}
	H(X)= - \int_{0}^{\infty} f(x) \log f(x)dx,
\end{equation*}
where log stands for the natural logarithm with 0 log 0 = 0.  The average uncertainty contained in the pdf $f(x)$ of $X$ is measured by $H(X)$.  The differential entropy is not always positive. Moreover, it is difficult to estimate the differential entropy of a continuous variable through empirical distributions. To overcome these difficulties, \cite{rao2004cumulative} proposed an alternative, a more general measure of uncertainty, called cumulative residual entropy (CRE). The CRE of a non-negative continuous rv $X$
denoted by $\mathcal{E}(X)$ is defined as
\begin{equation*}
	\mathcal{E}(X)=-\int_{0}^{\infty} \bar{F}(x) \log \bar{F}(x)dx,  
\end{equation*}
where $\bar{F}(x) = 1 - F (x) = P(X > x)$ denotes the survival (reliability) function of $X$.\\

The generalization of the Shannon entropy to different fields is interesting among researchers.  \cite{ubriaco2009entropies} defined a new entropy based on fractional calculus, which is
\[
S_\alpha(P)=\sum_{i} p_i \left( - \log p_i \right)^\alpha, \quad 0 \leq \alpha \leq 1.
\]

Various generalized entropy and cumulative entropy measures are available in literature.  Based on the fractional entropy developed using the principle of fractional calculus, \cite{xiong2019fractional} proposed a fractional cumulative residual entropy (FCRE) for an absolutely continuous non-negative rv $X$ with CDF $F(\cdot)$, as
\begin{equation}\label{1}
	\mathcal{E}_{\alpha}(X) = \int_{0}^{\infty}\Bar{F}(x) \left(-\log \Bar{F}(x)\right)^\alpha dx, \; 0\leq \alpha \leq 1.
\end{equation}
It is clear that $\mathcal{E}_0(X) = E(X)$ and when $\alpha = 1$, the FCRE becomes the CRE. That is, $\mathcal{E}_1(X) = \mathcal{E} (X)$.\\

Although abundant research is available for different uncertainity measures, a study of the FCRE utilizing quantile function (QF) does not seem to have been undertaken. It has been showed by many authors that the QF defined by
\begin{equation*}
	Q(u)=F^{-1}(u)=\inf \{x|F(x)\geq u\}, \; 0 \leq u \leq 1
\end{equation*}
is an efficient and equivalent alternative to the distribution function in modelling and analysis of statistical data (see \cite{gilchrist2000statistical}, \cite{nair2009quantile}). There are several properties of QFs that are not carried out
by the distribution functions. For instance, two QFs are added together to form another QF. Since QF is less affected by outliers and offers a straightforward analysis with limited information, it is frequently more convenient.
For some recent studies on QF, 
it's properties and usefulness in the identification of models we refer to \cite{nair2013quantile}, \cite{vineshkumar2021inferring}, \cite{varkey2023review}, \cite{aswin2023reliability}, \cite{kayal2024quantile},  and the references therein. Also, many QFs used in applied works such as various forms of lambda distributions (\cite{ramberg1974approximate}; \cite{gilchrist2000statistical}), the power-Pareto distribution (\cite{gilchrist2000statistical}
; \cite{hankin2006new}), Govindarajulu distribution (\cite{nair2012modelling}) etc. do not have tractable distribution functions but have explicit form of quantile functions.
This makes it difficult to use \eqref{1} to statistically study the properties of $\mathcal{E}_{\alpha}(X)$ for these distributions. Thus, formulation
of definition and properties of FCRE based on QFs is called for. The quantile-based FCRE (Q-FCRE) has several pros. First of all,
unlike \eqref{1},  the suggested measure is much easier to compute when the distribution functions are not tractable while the QFs have simple forms. Furthermore, quantile functions have certain properties that probability distributions do not possess. 
Applying these properties yield some new results and better
insight into the measure that are not posssible to obtain by the conventional approach.\\

The paper is organized into eight sections. After the present introductory section, in Sections 2 and 3 we introduce the quantile-based fractional cumulative residual entropy (Q-FCRE), its dynamic form, and study their various properties. In Section 4 we propose a non-parametric estimator for computing Q-FCRE.  In Section 5 we carry out simulation studies to illustrate the performance of the estimator given in Section 4. A method of validation of Q-FCRE based on the simulated values on the logistic map proposed by \citet{may1976simple} is presented in Section 6.  Finally, in Sections 7 and 8 we illustrate, respectively, a potential application of Q-FCRE in a complex financial system using the Dow Jones Industrial Average (DJIA) data set and concluding remarks of the study.

\section{Quantile-based FCRE}

\begin{defn}
	\textnormal{For a nonnegative continuous rv $X$ with quantile function $Q(u)$, the quantile-based FCRE (Q-FCRE) is defined as;
		\begin{eqnarray}\label{2}
			\mathcal{E}_{\alpha}^{Q}(X)&=&\int_{0}^{1}(1-p)(-\log(1-p))^\alpha dQ(p) \nonumber\\ 
			& =&\int_{0}^{1}(1-p)(-\log(1-p))^\alpha q(p)dp, \; 0 \leq \alpha \leq 1
		\end{eqnarray}
		where $q(p)=\frac{d}{dp}Q(p)$ is the quantile density function (qdf).}
\end{defn}

It is clear that when $\alpha=1$, $\mathcal{E}_{\alpha}^{Q}(X)$ becomes quantile-based cumulative residual entropy \citep{sankaran2017quantile}. $\mathcal{E}_{\alpha}^{Q}(X)$ is non-negative and non-additive. It is a convex function of parameter $\alpha$.  In terms of hazard quantile function, denoted by $H(u)=[(1-u)q(u)]^{-1}$, \eqref{2} can be represented as 
\begin{equation}
	\mathcal{E}_{\alpha}^{Q}(X)=\int_{0}^{1}[H(p)]^{-1}(-\log(1-p))^\alpha dp. \nonumber
\end{equation}

We now provide some examples of Q-FCRE for some important probability distributions.
\begin{itemize}
	\item [(i)] If $X$ is uniformly distributed over $[0, b]$, $b>0$, with the qdf, $q(u)=b$, then
	%\begin{equation*}
	$\mathcal{E}_{\alpha}^{Q}(X)= \frac{{b\Gamma(\alpha+1) }}{2^{\alpha+1}} $
	%\end{equation*}
	where $\Gamma (\cdot)$ is the complete gamma function. When $\alpha=1$, $\mathcal{E}_{Q}(X)=b/4$.
	\item [(ii)] If $X$ is following the exponential distribution with mean $1/\lambda$ and the qdf $q(u)=1/\lambda(1-u)$, then $\mathcal{E}_{\alpha}^{Q}(X) = \frac{\Gamma(\alpha+1)}{\lambda}$.  When $\alpha=1$, $\mathcal{E}_{\alpha}^{Q}(X)=1/\lambda$.

    \begin{figure}[H]
		\centering
		\includegraphics[width=0.9\linewidth]{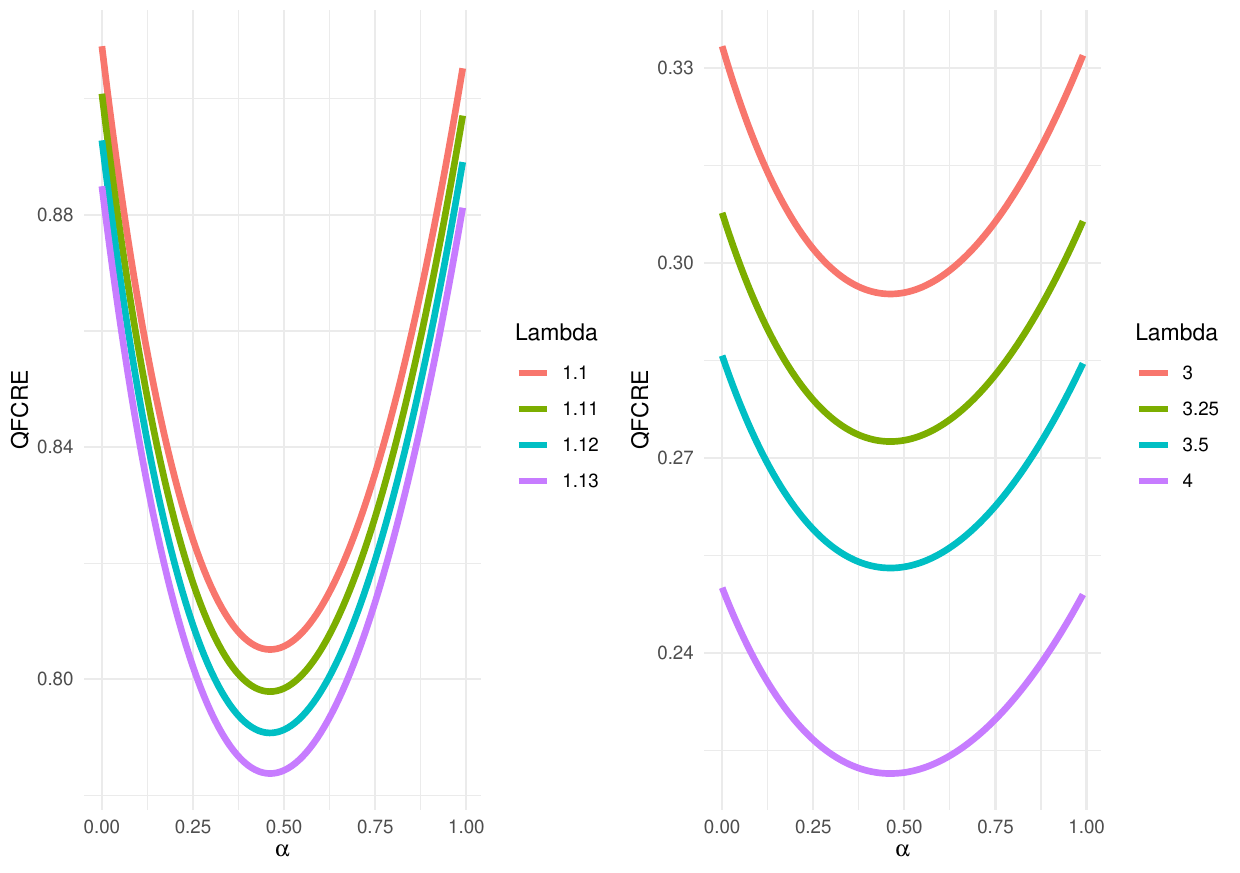}
		\caption{\textit{Plots of QFCRE for exponentially distributed rv given in Example 2, for  $\lambda =
				1.1, 1.11, 1.12, 1.13$ and $\lambda = 3, 3.25, 3.5, 4$, the values of $\alpha$ is taken along the
				x-axis.
		}}
		\label{figure 1}		
	\end{figure}
\end{itemize}
Certain models do not have closed expressions for distribution functions but have an explicit qf or qdf, which are considered in the following examples.

\begin{itemize}
	\item [(iii)] For the rv $X$ with qdf given by $q(u)=Cu^{\beta}(1-u)^{-(A+\beta)}$, where $C,A,\beta \in (- \infty, +\infty)$ (see Theorem 3.2 of \cite{nair2012modelling}), we get
	\begin{equation}
		\mathcal{E}_{\alpha}^{Q}(X)=C\int_{0}^{1}(1-p)^{-(A+\beta-1)}(-\log(1-p))^{\alpha}p^{\beta}dp. \nonumber
	\end{equation}
	When $C=2, A=0, \beta=0, \alpha=0.75$, $\mathcal{E}_{\alpha}^{Q}(X)=\frac{3 \Gamma(3/4)}{2^{11/4}}$ and
	when $C=2, \beta=0, A=0.5,\alpha=1, \mathcal{E}_{\alpha}^{Q}(X)=0.665$.  The members with this quantile function have monotone or non-monotone hazard quantile functions. Moreover, it contains several well-known distributions such as the exponential ($\beta=0, A=1$), Pareto ($\beta=0, A<1$), rescaled beta ($\beta=0, A>1$), the log-logistic distribution ($\beta=\lambda-1, A=2$) where $\beta, \lambda > 0$ and the life distribution proposed by \cite{Govindarajula1977ACO} ($\beta=\gamma-1, A=-\gamma$) with $Q(p)=\theta+\sigma((\gamma+1)p^r-\gamma p^{\gamma+1})$(for more details, see \cite{nair2012modelling}).
	
    \begin{figure}[H]
		\centering
		\includegraphics[width=0.9\linewidth]{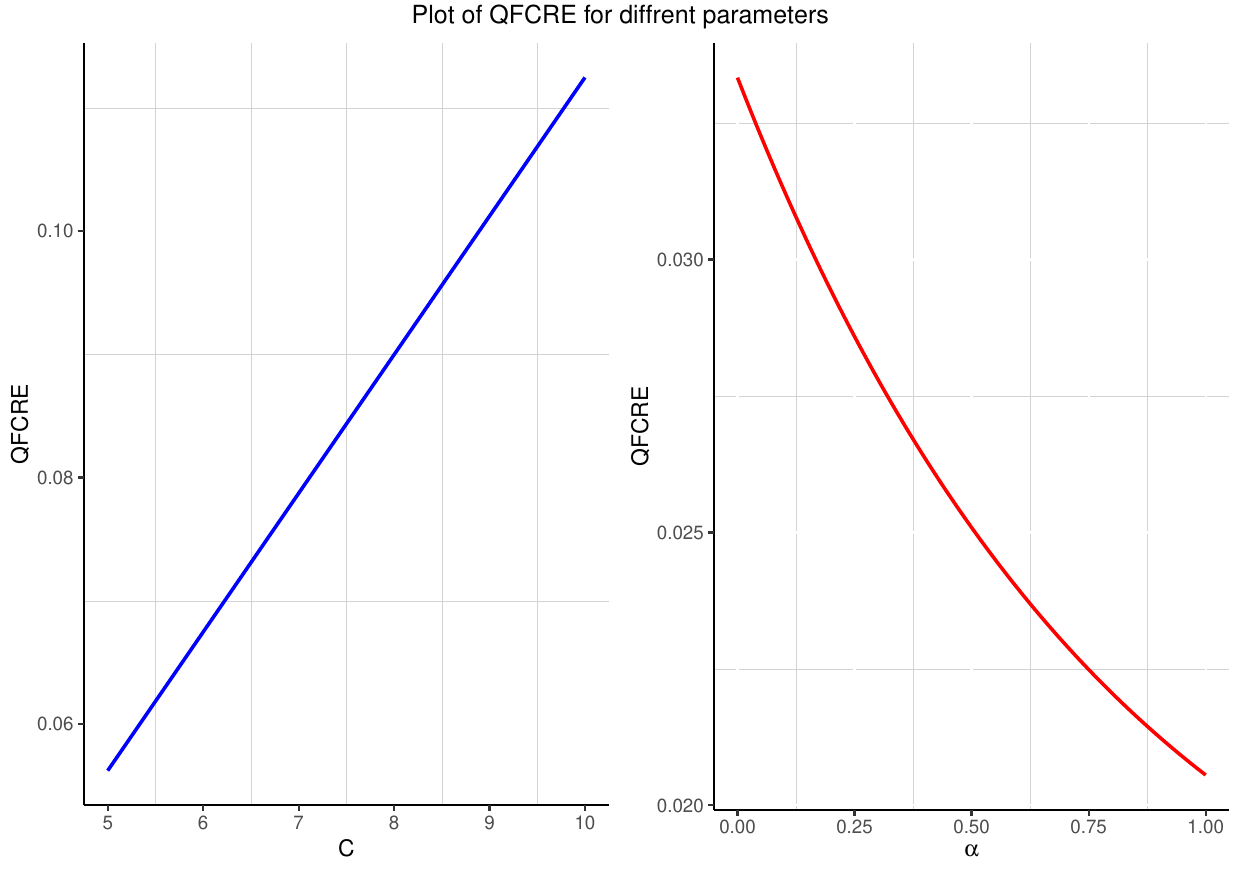}
		\caption{Plot of Q-FCRE}
		\label{fig:test1}
	\end{figure}
	\item [(iv)] Let $X$ is distributed with qdf $q(u)=(1-u)^{-A}(-\log(1-u))^{-B}$, where $A>0$, $B<1$.  Then, we have
	\begin{equation}
		\mathcal{E}_{\alpha}^{Q}(X)=\frac{ \Gamma{(\alpha-B+1)}}{(2-A)^{\alpha-B+1}}.\nonumber
	\end{equation}
	Note that $q(u)$ have  monotonic and nonmonotonic hazard quantile functions and it contains several well known distributions such as the Weibull distribution ($A=1, B=\frac{a-1}{a}$), with shape parameter $a$ and scale parameter $\lambda=Ca$ (the exponential and Rayleigh distributions are further special cases), uniform ($A=0, B=0$), Pareto ($A>1, B=0$) and rescaled beta ($A<1, B=0$).
	\item [(v)] Assume $X$ and $Y$ are non-negative rvs with QFs $Q_X(u)$ and $Q_Y(u)$ respectively. Then, $Y$ is said to be the PHM of $X$ if and only if $Q_Y(u)=Q_X(1-(1-u)^\frac{1}{\theta})$ with qdf of $Y$ $q_Y(u)=\frac{1}{\theta}q_X(1-(1-u)^\frac{1}{\theta})(1-u)^{\frac{1}{\theta}-1}$, $\theta>0$. The Q-FCRE of $Y$ is;
	\begin{equation*}
		\mathcal{E}_{\alpha}^Q(Y) = \frac{1}{\theta}\int_{0}^{1}(1-p)^{\frac{1}{\theta}}(-\log(1-p))^{\alpha}  q_X(1-(1-p)^\frac{1}{\theta}) dp.
	\end{equation*}
	For an instance, consider the qdf $q_X(u)=(1-u)^{-A}(-\log(1-u))^{-B}$  with corresponding PHM, $q_Y(u)=\theta^{B-1}(1-u)^{\frac{1-A}{\theta}-1}(-\log(1-u))^{-B}$, $A>0$, $\theta>0$, $B<1$. In this case $\mathcal{E}_{\alpha}^Q(Y)=\frac{\theta^{\alpha}\Gamma(\alpha-B+1)}{(\theta-A+1)^{\alpha-B+1}}$,  provided that $\theta > A-1$.
\end{itemize}
\subsection{Properties of Q-FCRE}
\begin{enumerate}
	\item $\mathcal{E}_{\alpha}^{Q}(X)\geq 0$, It is trivial that quantile based FCRE is always non-negative.
	\item The Q-FCRE is independent of shift parameter.  That is, $X$ possesses the same Q-FCRE as $X+b$, for an arbitrary non-negative parameter $b$.
	\item Let $a>0$, $b>0$ and $Y=aX+b$ , then $\mathcal{E}_{\alpha}^{Q}(Y)=a\mathcal{E}_{\alpha}^{Q}(X)$.
	\begin{proof}
		\begin{eqnarray}\label{3}
			\mathcal{E}_\alpha(Y)&=&\int_{0}^{\infty}\Bar{F}_Y(y)(-\log \Bar{F}_Y(y))^\alpha dy \nonumber\\
			&=& \int_{b}^{\infty} \Bar{F}_X\bigg(\frac{x-b}{a}\bigg) \bigg(-\log \Bar{F}_X\bigg(\frac{x-b}{a}\bigg)\bigg)^\alpha dx, \; x \geq b. 
		\end{eqnarray}
		By putting $\frac{x-b}{a}=z$ in \eqref{3} we get\\
		\begin{equation*}
			\mathcal{E}_{\alpha}(Y)=a \int_{0}^{\infty}\Bar{F}_X(z)(-\log \Bar{F}_X(z))^\alpha dz=a \mathcal{E}_{\alpha}(X),
		\end{equation*}
		and, $\mathcal{E}_{\alpha}^{Q}(Y)=a\int_{0}^{1}(1-p)(-\log(1-p))^\alpha q(p)dp=a \mathcal{E}_{\alpha}^{Q}(X)$.
	\end{proof}
	\item From the properties of QFs given in \cite{nair2022reliability}, if $Q(u)=Q_1(u)+Q_2(u)$ where $Q_1$ and $Q_2$ are QFs then 
	\begin{equation*}
		\mathcal{E}_{\alpha}^Q=\int_{0}^{1}(1-p)(-\log(1-p))^{\alpha}(q_1(p)+q_2(p))dp=\mathcal{E}_{\alpha}^{Q_1}+\mathcal{E}_{\alpha}^{Q_2}.
	\end{equation*}
	For example, if $Q_1(u)=u$ and $Q_2(u)=-\frac{1}{\lambda}(\log(1-u))$ denote the uniform and exponential distributions respectively,
	\begin{equation*}
		\mathcal{E}_{\alpha}^Q = \int_{0}^{1}(1-p)(-\log(1-p))^{\alpha}\bigg(\frac{1+\lambda-\lambda p}{\lambda-\lambda p}\bigg)dp.
	\end{equation*}
	Setting, $\alpha=0.5$, $\lambda=2$, $\mathcal{E}_{\alpha}^Q=0.7564$
	\item Consider two quantile functions $Q_1$ and $Q_2$ such that $Q_1(u)+Q_2(u)=Q(u)$, then 
	\begin{equation*}
		\textrm{max}\{\mathcal{E}_\alpha^{Q_1},\mathcal{E}_\alpha^{Q_2}\}\leq\mathcal{E}_\alpha^{Q_1+Q_2}.
	\end{equation*}
	The proof can be directly obtained from the above properties 1 and 4.
	\item If $Q(u)=Q_1(u)Q_2(u)$, where $Q_1$ and $Q_2$ are positive QFs, then,
	\begin{equation*}
		\mathcal{E}_{\alpha}^Q=\int_{0}^{1}(1-p)(-\log(1-p))^{\alpha}(Q_1(p)q_2(p)+Q_2(p)q_1(p))dp.
	\end{equation*}
	\item If $Q_X(u)$ is the QF of $X$. Then, the QF of $Y=\frac{1}{X}$ is $Q_Y(u)=[Q(1-u)]^{-1}$. Hence, Q-FCRE of $Y$ is
	\begin{equation*}
		\mathcal{E}_{\alpha}^Q(Y)=\int_{0}^{1}(1-p)(-\log(1-p))^{\alpha}\frac{q(1-p)}{Q^2(1-p)}dp.
	\end{equation*}
	This result can be illustrated with an example, assume $X$ follows the power distribution with $Q_X(u)=u^{\beta}$. Then, $Y=\frac{1}{X}$ has Pareto I distribution with QF, $Q_Y(u)=(1-u)^{-\beta}$. Hence, the Q-FCRE of $Y$ is $\mathcal{E}_{\alpha}^Q(Y)=\frac{\beta \Gamma(\alpha+1)}{(1-\beta)^{\alpha+1}}$.
	
	\item For a non-negaive rv $X$ and $0\leq\alpha\leq1$, it holds that $\mathcal{E}_{\alpha}^{Q}(X)\leq [\mathcal{E}^Q(X)]^\alpha$, where $\mathcal{E}^Q(X)$ is the quantile-based cumulative residual entropy (Q-CRE). 
 \begin{proof}
 Since, $(1-p)\leq (1-p)^\alpha$ for $0 \leq p, \alpha \leq 1$
		\begin{eqnarray}
			\mathcal{E}_{\alpha}^{Q}(X)&=& \int_{0}^{1}(1-p)(-\log(1-p))^\alpha q(p)dp \nonumber \\
			& \leq &\int_{0}^{1} (-(1-p)\log(1-p))^\alpha q(p)dp \nonumber\\
			& \leq & \bigg[-\int_{0}^{1}(1-p)\log(1-p)q(p)dp\bigg]^\alpha = [\mathcal{E}^Q(X)]^\alpha\nonumber
		\end{eqnarray}
		where Jensen's inequality is used to generate the final inequality. The equality holds at $\alpha = 1$.
	\end{proof}
	\item Let $X \geq 0$ with density function $f$.  Then
	$\mathcal{E}_{\alpha}^{Q}(X)\geq C(q)e^{\mathcal{E}_Q(X)}$, where $\mathcal{E}_Q(X)$ is the quantile based differential entropy and $C(q)=e^{\int_{0}^{1}\log[(1-p)(-\log(1-p))]^qdp}$ is a function of $q$.
	\begin{proof}
			Let $1-F(x)=1-p=\int_{x}^{\infty}f(t)dt$. Using the log-sum inequality,\\
		\begin{eqnarray}\label{4}
			\int_{0}^{1} f(Q(p))\log\frac{f(Q(p))}{(1-p)(-\log(1-p))^q}dQ(p) & \geq & \log \frac{1}{\int_{0}^{1} (1-p) (-\log(1-p))^q q(p)dp} \nonumber\\ 
			& = & \log \frac{1}{\mathcal{E}_{\alpha}^{Q}(X)}=-\log (\mathcal{E}_{\alpha}^{Q}(X)). 
		\end{eqnarray}
		Moreover, LHS of above inequality can be expressed as;
		\begin{equation}\label{5}
			\int_{0}^{1} f(Q(p))\log\frac{f(Q(p))}{(1-p)(-\log(1-p))^q}dQ(p)  = -\mathcal{E}_Q(X) - \int_{0}^{1} \log[(1-p)(-\log(1-p))^q] dp
		\end{equation}
		From \eqref{4} and \eqref{5},
		\begin{equation}\label{6}
			\log (\mathcal{E}_{\alpha}^{Q}(X)) \geq  \mathcal{E}_Q(X) + \int_{0}^{1} \log[(1-p)(-\log(1-p))^q] dp.   
		\end{equation}
		Exponentiating both sides of \eqref{6} we get,
		\begin{equation*}
			\mathcal{E}_{\alpha}^{Q}(X) \geq C(q)e^{\mathcal{E}_Q(X)}  
		\end{equation*}
	\end{proof}
\end{enumerate}
%\begin{proposition}
%Let $X$ be a non-negative rv.  Then the Q-FCRE $\mathcal{E}_{\alpha}^Q(X) = \frac{1}{2} E[(-\log(1-P))^\alpha q(P)]$, where $P \sim \beta_1(1,2)$.
% \end{proposition}
%\textbf{Proof.} We know that the density function of $\beta(1,2)$ is given by $f(p)=2(1-p)$. Then,
%\begin{eqnarray}
%\mathcal{E}_{\alpha}^Q(X)&=&\int_{0}^{1}(1-p)(-\log(1-p))^\alpha q(p)dp \nonumber \\
% {} &=& \int_{0}^{1}2(1-p)(-\log(1-p))^\alpha \frac{q(p)}{2}dp \nonumber \\ 
%{}&=& \frac{1}{2} E((-\log(1-p))^\alpha q(p)) \nonumber 
%\end{eqnarray}
\begin{defn}
A rv $X$ is lesser than the rv $Y$ in Q-FCRE order, represented as $X \leq _{FCRQE} Y$, if $\mathcal{E}_{\alpha}^{Q}(X) \leq \mathcal{E}_{\alpha}^{Q}(Y)$.
\end{defn}

Now, we prove a closure property of hazard quantile order between $X$ and $Y$ and the Q-FCRE order.
\begin{thm}
If $X$ is lesser than $Y$ in the hazard quantile function order, denoted by $X\leq_{HQ}Y$, then $X \leq_{FCRQE} Y$.  
\end{thm}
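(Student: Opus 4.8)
The plan is to exploit the hazard-quantile representation of the Q-FCRE already established above, namely
\[
\mathcal{E}_{\alpha}^{Q}(X)=\int_{0}^{1}[H_X(p)]^{-1}(-\log(1-p))^\alpha\,dp,
\]
and to reduce the claim to a pointwise comparison of integrands. First I would recall that, by definition, $X\leq_{HQ}Y$ means $H_X(u)\geq H_Y(u)$ for every $u\in(0,1)$, where $H_X$ and $H_Y$ denote the hazard quantile functions of $X$ and $Y$, respectively. Since both functions are strictly positive on $(0,1)$, taking reciprocals reverses the inequality, giving $[H_X(u)]^{-1}\leq[H_Y(u)]^{-1}$ for all $u\in(0,1)$.

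Next I would observe that the weight $(-\log(1-p))^\alpha$ is nonnegative on $(0,1)$ for every $\alpha\in[0,1]$, because $-\log(1-p)\geq 0$ throughout this interval. Multiplying the reciprocal inequality by this nonnegative kernel preserves its direction, so the integrand defining $\mathcal{E}_{\alpha}^{Q}(X)$ is dominated pointwise by that of $\mathcal{E}_{\alpha}^{Q}(Y)$. Integrating over $(0,1)$ and invoking monotonicity of the integral then yields
\[
\mathcal{E}_{\alpha}^{Q}(X)=\int_{0}^{1}[H_X(p)]^{-1}(-\log(1-p))^\alpha\,dp\leq\int_{0}^{1}[H_Y(p)]^{-1}(-\log(1-p))^\alpha\,dp=\mathcal{E}_{\alpha}^{Q}(Y),
\]
which is precisely the statement $X\leq_{FCRQE}Y$.

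The argument is essentially a monotone-kernel comparison, so no serious analytic obstacle arises. The one point demanding care is the orientation of the hazard quantile order: because $X\leq_{HQ}Y$ is the order in which $X$ carries the \emph{larger} hazard (i.e. $H_X\geq H_Y$), it is the reciprocation that flips the inequality and makes the Q-FCRE of $X$ the smaller of the two. Confirming the strict positivity of $H_X$ and $H_Y$ and the nonnegativity of the logarithmic weight on the open interval $(0,1)$ suffices to render every step rigorous.
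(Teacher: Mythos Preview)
Your proof is correct and is essentially the same argument as the paper's: the paper starts from $H_X(u)\ge H_Y(u)$, rewrites this as $(1-u)q_X(u)\le (1-u)q_Y(u)$, multiplies by the nonnegative kernel $(-\log(1-u))^\alpha$, and integrates over $(0,1)$. Since $[H(p)]^{-1}=(1-p)q(p)$, your hazard-quantile formulation is just a compact repackaging of the identical pointwise comparison and integration step.
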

\begin{proof}
	When $X\leq_{HQ}Y$ we have $H_X(u) \geq H_Y(u)$, or equivalently we obtain,
	\begin{equation} \label{eq1}
		\begin{split}
			X \leq _{HQ} Y & \Longrightarrow \frac{1}{(1-u)q_X(u)} \geq \frac{1}{(1-u)q_Y(u)} \Longrightarrow (1-u)q_X(u) \leq (1-u)q_Y(u)\nonumber \\
			& \Longrightarrow (1-u)(-\log(1-u))^{\alpha}q_X(u) \leq (1-u)(-\log(1-u))^{\alpha}q_Y(u) \\ 
			& \Longrightarrow \int_{0}^{1} (1-u)(-\log(1-u))^{\alpha}q_X(u)du \leq \int_{0}^{1} (1-u)(-\log(1-u))^{\alpha}q_Y(u)du \\ 
			& \Longrightarrow \mathcal{E}_{\alpha}^Q(X) \leq \mathcal{E}_{\alpha}^Q(Y)
		\end{split}
	\end{equation}
	as required.
\end{proof}
It is easy to show that QF hazard ordering and reversed QF hazard ordering are equivalent, $i.e.$, $X \leq_{HQ} Y \Longleftrightarrow X \geq_{RHQ} Y$. For a reversed hazard QF order definition, see Definition 2.1 of \citep{krishnan2020some}. Therefore in Theorem 1, $X \leq_{HQ} Y$ can be substituted with $X \geq_{RHQ} Y$ to
obtain $X \leq_{FCRQE} Y$.

\begin{defn}
A rv $X$ is lesser than $Y$ in terms of dispersive ordering, denoted by $X \leq_{disp} Y$, if $Q_Y (u) - Q_X (u)$ is increasing in $u \in (0, 1)$.

\end{defn}
\begin{thm}
Suppose $Q_X$ and $Q_Y$ denote the QFs of $X$ and $Y$ respectively.  We have the following results.
\begin{itemize}
	\item [(i)] If $X$ is lesser than $Y$ in the reversed hazard quantile function ordering denoted by $X \leq _{RHQ} Y$ and $\zeta$ is increasing and concave, then, $X \leq _{disp} Y \Longrightarrow \zeta(X) \geq_{FCRQE} \zeta(Y)$.
	\item [(ii)] If $X$ is lesser than $Y$ in the hazard quantile function ordering denoted by $X \leq _{HQ} Y$ and $\zeta$ is increasing and convex, then, $X \leq _{disp} Y \Longrightarrow \zeta(X) \leq_{FCRQE} \zeta(Y)$.
\end{itemize}
\end{thm}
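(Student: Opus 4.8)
The plan is to reduce both comparisons to a single pointwise inequality between quantile density functions and then integrate, reusing the monotone-integration device from the proof of Theorem 1. Since $\zeta$ is increasing, the QF of $\zeta(X)$ is $Q_{\zeta(X)}(u)=\zeta(Q_X(u))$, so differentiation gives the qdf $q_{\zeta(X)}(u)=\zeta'(Q_X(u))\,q_X(u)$, and likewise $q_{\zeta(Y)}(u)=\zeta'(Q_Y(u))\,q_Y(u)$. Substituting these into \eqref{2},
\[
\mathcal{E}_{\alpha}^{Q}(\zeta(X))=\int_{0}^{1} w(u)\,\zeta'(Q_X(u))\,q_X(u)\,du,\qquad w(u):=(1-u)\bigl(-\log(1-u)\bigr)^{\alpha}\ge 0,
\]
and similarly for $\zeta(Y)$. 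Because $w(u)\ge 0$ on $(0,1)$, it suffices to compare the two integrands pointwise: a pointwise inequality $q_{\zeta(X)}(u)\gtrless q_{\zeta(Y)}(u)$ integrates to the corresponding $\mathcal{E}_{\alpha}^{Q}$ inequality, exactly as in the implication chain used to prove Theorem 1.

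The heart of the argument is to sign the two factors $\zeta'(Q_\bullet(u))$ and $q_\bullet(u)$ separately and to show they reinforce. First I would use the dispersive hypothesis to control the location: $X\le_{disp}Y$ means $Q_Y-Q_X$ is increasing, which (integrating from the common left endpoint of the non-negative supports) yields $Q_X(u)\le Q_Y(u)$ for all $u$. Monotonicity of $\zeta'$ then signs the first factor: for part (i) concavity makes $\zeta'$ non-negative and decreasing, whence $\zeta'(Q_X(u))\ge\zeta'(Q_Y(u))$; for part (ii) convexity makes $\zeta'$ non-negative and increasing, whence $\zeta'(Q_X(u))\le\zeta'(Q_Y(u))$. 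Second, the hazard--quantile hypothesis controls the factor $q$: through $H(u)=[(1-u)q(u)]^{-1}$ and the equivalence $X\le_{HQ}Y\Leftrightarrow X\ge_{RHQ}Y$ recorded after Theorem 1, the order $X\le_{RHQ}Y$ in (i) gives $q_X(u)\ge q_Y(u)$, while $X\le_{HQ}Y$ in (ii) gives $q_X(u)\le q_Y(u)$. Multiplying the two sign-matched inequalities between non-negative quantities—this is precisely where $\zeta'\ge 0$, i.e. that $\zeta$ is increasing, is needed—gives $\zeta'(Q_X)q_X\ge\zeta'(Q_Y)q_Y$ in case (i) and the reverse in case (ii); integrating against $w$ then delivers $\zeta(X)\ge_{FCRQE}\zeta(Y)$ and $\zeta(X)\le_{FCRQE}\zeta(Y)$ respectively.

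I expect the main obstacle to be the directional bookkeeping that forces the two factor inequalities to point the same way, together with checking that the hypotheses are mutually consistent. The pairing is delicate: concavity (decreasing $\zeta'$) must be matched with the reversed order $X\le_{RHQ}Y$, and convexity (increasing $\zeta'$) with $X\le_{HQ}Y$, precisely so that the location-driven inequality on $\zeta'(Q_\bullet)$ and the spread-driven inequality on $q_\bullet$ reinforce rather than cancel; with the wrong pairing the product is indeterminate. I would therefore write out, for each part, the explicit translation of every hypothesis into an inequality on $Q$ and on $q$, confirm that the signs align, and state the endpoint assumption needed to pass from ``$Q_Y-Q_X$ increasing'' to ``$Q_X\le Q_Y$.'' Once the pointwise qdf comparison is secured, the concluding integration against the non-negative weight $w$ is immediate and mirrors the proof of Theorem 1.
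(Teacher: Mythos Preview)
Your proposal is correct and follows essentially the same route as the paper: use $X\le_{disp}Y$ to get $Q_X\le Q_Y$, use concavity/convexity of $\zeta$ to sign $\zeta'(Q_X)$ versus $\zeta'(Q_Y)$, use the (reversed) hazard quantile hypothesis to sign $q_X$ versus $q_Y$, multiply the two non-negative factor inequalities, and integrate against the non-negative weight $(1-u)(-\log(1-u))^\alpha$. The only cosmetic difference is that the paper routes the product comparison through reciprocals (writing $1/\zeta'(Q_X)\le 1/\zeta'(Q_Y)$ and $1/[(1-u)q_X]\le 1/[(1-u)q_Y]$ before combining), whereas you multiply the direct inequalities; your version is slightly cleaner and you are also more explicit than the paper about the endpoint assumption needed to pass from ``$Q_Y-Q_X$ increasing'' to ``$Q_X\le Q_Y$.''
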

\begin{proof}
	Case (i): Since $X \leq_{disp}Y$ implies $Q_X(u) \leq Q_{Y}(u)$ and as $\zeta$ is concave and increasing we obtain
	\begin{equation}\label{7}
		\zeta'(Q_X(u)) \geq  \zeta'(Q_Y(u)) \Longrightarrow 0 \leq \frac{1}{\zeta'(Q_X(u))} \leq \frac{1}{\zeta'(Q_Y(u))}.
	\end{equation}
	Further,
	\begin{equation}\label{8}
		X \leq_{rhq}Y \Longrightarrow \frac{1}{(1-u)q_X(u)} \leq \frac{1}{(1-u)q_Y(u)}.  
	\end{equation}
	Upon combining \eqref{7} and \eqref{8} we obtain,
	\begin{equation}
		\begin{split}
			& \Longrightarrow \frac{1}{(1-u)q_X(u)\zeta'(Q_X(u)) }\leq \frac{1}{(1-u)q_Y(u)\zeta'(Q_Y(u)) } \nonumber \\
			& \Longrightarrow \int_{0}^{1} (1-p)(-\log(1-p))^{\alpha}q_X(p)\zeta'(Q_X(p))dp \geq \int_{0}^{1} (1-p)(-\log(1-p))^{\alpha}q_Y(p)\zeta'(Q_Y(p))dp\nonumber\\
			& \Longrightarrow \mathcal{E}_{\alpha}^{Q}(\zeta(X)) \geq \mathcal{E}_{\alpha}^{Q}(\zeta(Y)).
		\end{split}
	\end{equation}
	The case (ii) can be proved analogously.
\end{proof}

The usefulness of weighted distributions can be viewed in many areas, such as reliability theory,  renewal theory, and ecology. When rv $X$ has 
pdf $f_X$, then the corresponding weighted rv $X_\omega$ has pdf of the form $f_{\omega} (x) = \frac{\omega(x)f_X(x)}{E(\omega(X))}$
, where $\omega(x)$ is a weight function having
a positive value and a finite expectation. Let us examine a specific case of 
weighted rv called an escort rv. The pdf for the escort distribution associated with $X$ is provided as
\begin{equation}\label{esc dn}
f_{X_{e,c}}(x) = \frac{f_X^c(x)}{\int_{0}^{\infty}f_X^c(x)dx}, \hspace{0.2cm} c>0, 
\end{equation}
given that the involved integral exists. Observe that a weighted distribution with a suitable weight function gives escort distribution. Now, let's examine the Q-FCRE for the escort distribution in
\eqref{esc dn}. The  density QF of $X_{e,c}$  using the QF $Q_X$ in \eqref{esc dn} is
\begin{eqnarray}\label{10}
f_{X_{e,c}}(Q_X(u))&=&\frac{f_X^c(Q_X(u))}{\int_{0}^{1}f_X^{c}(Q_X(u))dQ_X(u)}\nonumber\\
&=&\frac{f_X^c(Q_X(u))}{\int_{0}^{1}f_X^{c-1}(Q_X(p))dp}=\frac{1}{q_X^{c}(u)\int_{0}^{1}q_X^{1-c}(p)dp}=\frac{1}{q_{X_{e,c}}(u)},
\end{eqnarray}
where $q_{X_{e,c}}(u)$ is the qdf of $X_{e,c}$.
\begin{prop}
Let $X_{e,c}$ be an escort rv having qdf $q_{e,c}$ corresponding to a rv $X$ with pdf $f_X$ and qdf $q_X$. Then, Q-FCRE of $X_{e,c}$ is
\begin{equation*}
	\mathcal{E}_{\alpha}^{Q}(X_{e,c})=I_c^Q(X)\mathcal{E}_{\alpha,c}^{Q}(X)
\end{equation*}
where, $I_c^Q(X)=\int_{0}^{1} f_{X}^c(Q_X(p))dQ_X(p)$ is the quantile based information generating function(Q-IGF) and $\mathcal{E}_{\alpha,c}^{Q}(X)=\int_{0}^{1}(1-p)(-\log(1-p))^\alpha q_{X}^c(p)dp.$
\end{prop}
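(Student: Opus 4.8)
The plan is to start directly from the definition \eqref{2} of Q-FCRE applied to the escort variable $X_{e,c}$, insert the expression for its quantile density function that is already recorded in \eqref{10}, and then separate the $p$-independent constant from the integral. Reading off \eqref{10}, the qdf of the escort rv is $q_{X_{e,c}}(u)=q_X^{c}(u)\int_{0}^{1}q_X^{1-c}(p)\,dp$, so substituting into $\mathcal{E}_{\alpha}^{Q}(X_{e,c})=\int_{0}^{1}(1-p)(-\log(1-p))^{\alpha}q_{X_{e,c}}(p)\,dp$ and noting that $\int_{0}^{1}q_X^{1-c}(p)\,dp$ does not depend on the outer integration variable, I would pull this factor outside. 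What remains is exactly $\int_{0}^{1}(1-p)(-\log(1-p))^{\alpha}q_X^{c}(p)\,dp=\mathcal{E}_{\alpha,c}^{Q}(X)$.

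The only substantive step is then to identify the leftover constant $\int_{0}^{1}q_X^{1-c}(p)\,dp$ with the Q-IGF $I_c^{Q}(X)$. For this I would invoke the density--quantile identity $f_X(Q_X(p))=1/q_X(p)$, obtained by differentiating $F_X(Q_X(p))=p$. Writing $dQ_X(p)=q_X(p)\,dp$ and substituting $f_X^{c}(Q_X(p))=q_X^{-c}(p)$ gives $I_c^{Q}(X)=\int_{0}^{1}f_X^{c}(Q_X(p))\,dQ_X(p)=\int_{0}^{1}q_X^{-c}(p)\,q_X(p)\,dp=\int_{0}^{1}q_X^{1-c}(p)\,dp$, which is precisely the constant factor extracted above. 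Combining the two steps yields $\mathcal{E}_{\alpha}^{Q}(X_{e,c})=I_c^{Q}(X)\,\mathcal{E}_{\alpha,c}^{Q}(X)$, as claimed.

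There is no real obstacle here beyond the bookkeeping of the density--quantile relationship: essentially all of the content is already packaged in \eqref{10}, so the main task is to display the factorization cleanly and to make the reidentification of the constant as $I_c^{Q}(X)$ explicit rather than implicit. The one point I would state carefully is that the constant $\int_{0}^{1}q_X^{1-c}(p)\,dp$ is finite, which is guaranteed by the existence condition on the integral already assumed for the escort distribution in \eqref{esc dn}; this legitimizes pulling the factor out of the integral and ensures both $I_c^{Q}(X)$ and $\mathcal{E}_{\alpha,c}^{Q}(X)$ are well defined.
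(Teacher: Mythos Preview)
Your proposal is correct and follows essentially the same route as the paper: both start from the definition of Q-FCRE for $X_{e,c}$, substitute the expression for $q_{X_{e,c}}$ from \eqref{10}, and factor out the $p$-independent normalizing constant. Your version is slightly more explicit in identifying $\int_{0}^{1}q_X^{1-c}(p)\,dp$ with $I_c^{Q}(X)$ via the density--quantile relation $f_X(Q_X(p))=1/q_X(p)$, whereas the paper leaves that identification implicit by keeping the constant in the form $\int_{0}^{1}f_X^{c}(Q_X(p))\,dQ_X(p)$ throughout.
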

\begin{proof}
	Q-FCRE of $X_{e,c}$ is given by;
	\begin{eqnarray}
		\mathcal{E}_{\alpha}^{Q}(X_{e,c})&=&\int_{0}^{1}(1-p)(-\log(1-p))^\alpha q_{X_{e,c}}(p)dp\nonumber\\
		{}&=&\int_{0}^{1}(1-p)(-\log(1-p))^\alpha \frac{1}{f_{X_{e,c}}(Q_X(p))} dp. \nonumber
	\end{eqnarray}
	Now, from \eqref{8} we obtain;
	\begin{eqnarray*}
		\mathcal{E}_{\alpha}^{Q}(X_{e,c})&=&\int_{0}^{1}(1-p)(-\log(1-p))^\alpha \frac{\int_{0}^{1}f_X^{c}(Q_X(p))dQ_X(p)}{f_X^c(Q_X(p))} dp\\
		& = &  \int_{0}^{1}f_X^{c}(Q_X(p))dQ_X(p) \int_{0}^{1}(1-p)(-\log(1-p))^\alpha q_{X}^c(p)dp\\
		& = & I_c^Q(X)\mathcal{E}_{\alpha,c}^{Q}(X),
	\end{eqnarray*}
	as required.
\end{proof}

Next, we consider the montone transformation of a random variable to see its effect on quantile-based FCRE. Assuming $\zeta$ is a non-decreasing function and $Y=\zeta(X)$, where $X$ is a random variable with pdf $f_X$ and QF $Q_X$. Then, pdf of $Y$ is $f_{Y}(y)=\frac{f_X(\zeta^{-1}(y))}{\zeta'(\zeta^{-1}(y))}$. Moreover, $F_{Y}(y)=F_{X}(\zeta^{-1}(y))\Rightarrow F_Y(Q_Y(u))=F_X(\zeta^{-1}(Q_Y(u))) \Rightarrow \zeta^{-1}(Q_Y(u))=F_X^{-1}(u)=Q_X(u) $. By using this, the pdf of $Y$ can be expressed as,
\begin{equation}\label{11}
f_Y(Q_Y(u))=\frac{f_X(\zeta^{-1}(Q_Y(u))}{\zeta'(\zeta^{-1}(Q_Y(u)))}=\frac{f_X(Q_X(u))}{\zeta'(Q_X(u))}=\frac{1}{q_X(u)\zeta'(Q_X(u))}=\frac{1}{q_Y(u)}.
\end{equation}
\begin{thm}\label{three}
Suppose $Q_X$ and $q_X$ are the QF and qdf corresponding to a rv $X$. Further let $\zeta$ be a positive-valued increasing function. Then,
\begin{equation}
	\mathcal{E}_{\alpha}^{Q}(\zeta(X))=\int_{0}^{1}(1-p)(-\log(1-p))^{\alpha}q_X(p)\zeta'(Q_X(p))dp.\nonumber
\end{equation}
\end{thm}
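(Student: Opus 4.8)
The plan is to reduce the claim to the definition of the Q-FCRE given in \eqref{2} applied to the transformed variable $Y=\zeta(X)$, and then to invoke the quantile density relation already derived in \eqref{11}. Concretely, I would begin by writing out
\begin{equation*}
	\mathcal{E}_{\alpha}^{Q}(\zeta(X))=\mathcal{E}_{\alpha}^{Q}(Y)=\int_{0}^{1}(1-p)(-\log(1-p))^{\alpha}q_Y(p)\,dp,
\end{equation*}
which is just the definition of the measure applied to $Y$ with its own quantile density $q_Y$.

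The single substantive step is to express $q_Y$ in terms of the data of $X$ and $\zeta$. Since $\zeta$ is positive-valued and increasing, the quantile function of $Y$ satisfies $Q_Y(u)=\zeta(Q_X(u))$; differentiating in $u$ yields $q_Y(u)=\zeta'(Q_X(u))\,q_X(u)$. This is precisely the chain of equalities recorded in \eqref{11}, where it was shown that $f_Y(Q_Y(u))=\bigl[q_X(u)\zeta'(Q_X(u))\bigr]^{-1}=q_Y(u)^{-1}$, so I would simply cite that identity rather than rederive it. Monotonicity of $\zeta$ guarantees that $Q_Y=\zeta\circ Q_X$ is the correct quantile function (no reflection of the argument is needed, as would occur for a decreasing transformation), and positivity together with differentiability ensures $\zeta'(Q_X(u))q_X(u)$ is a legitimate nonnegative quantile density.

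I would then conclude by substituting $q_Y(p)=q_X(p)\zeta'(Q_X(p))$ into the displayed integral, giving
\begin{equation*}
	\mathcal{E}_{\alpha}^{Q}(\zeta(X))=\int_{0}^{1}(1-p)(-\log(1-p))^{\alpha}q_X(p)\zeta'(Q_X(p))\,dp,
\end{equation*}
which is the asserted formula. In truth there is no real obstacle here: the content of the theorem is entirely carried by the transformation rule \eqref{11}, and the only point requiring a word of justification is that the increasing (as opposed to merely monotone) hypothesis on $\zeta$ is what makes $Q_Y=\zeta\circ Q_X$ hold without an orientation-reversing correction, so the integrand transforms cleanly by the ordinary chain rule.
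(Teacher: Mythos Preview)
Your proposal is correct and follows essentially the same approach as the paper: set $Y=\zeta(X)$, apply the definition \eqref{2} to $Y$, and then invoke \eqref{11} to replace $q_Y(p)$ by $q_X(p)\zeta'(Q_X(p))$. The paper's proof is in fact slightly terser than yours, omitting the remarks about why the increasing hypothesis is needed, but the logical skeleton is identical.
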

\begin{proof}
	Let $Y=\zeta(X)$
	\begin{equation}
		\mathcal{E}_{\alpha}^Q(\zeta(X))=\int_{0}^{1}(1-p)(-\log(1-p))^{\alpha}q_{Y}(p)dp, \; 0\leq \alpha \leq 1. \nonumber
	\end{equation}
	From $\ref{11}$, we obtain $q_Y(u)=q_X(u)\zeta'(Q_X(u))$.  Hence,
	\begin{equation}
		\mathcal{E}_{\alpha}^{Q}(\zeta(X))=\int_{0}^{1}(1-p)(-\log(1-p))^\alpha q_X(p) \zeta'(Q_X(p))dp. \nonumber
	\end{equation}
\end{proof}

\begin{ex}
\textnormal{Suppose $X$ is exponential with QF,  $Q_{X}(u)=\frac{-\log(1-u)}{\theta}$, and qdf $q_X(u)=\frac{1}{\theta(1-u)}$. Using the transformation $Y = \zeta(X)=X^\beta, \beta>0$ has a Weibull distribution with QF $Q_{\zeta(X)}(u)=\theta(-\log(1-u))^\beta$, such that $\zeta(Q_X(u))=\Bigg[\frac{-\log(1-u)}{\theta}\bigg]^\beta$ and $\zeta'(Q_X(u))=\frac{\beta(-\log(1-u))^{\beta-1}}{(1-u)\theta^\beta}$. Then, Q-FCRE for the Weibull distribution is,
	\begin{equation}
		\mathcal{E}_{\alpha}^Q(\zeta(X))=\frac{\beta}{(\alpha+\beta)\theta^{\beta+1}}.
\end{equation}}
\end{ex}
\section{Quantile-based dynamic FCRE}
Many scientific disciplines, including reliability, survival analysis, actuarial science, economics, business, and many more are interested in the study of duration. Suppose $X$ represents a non-negative rv with cdf $F$ representing duration such as lifetime. In many applications, it is crucial to record the effects of the age $t$ of an individual or an item under study on the information about the residual lifetime. For example, in reliability, the study of a component's or system's lifetime after time $t$ is of great importance when a component or system of components operates at time $t$. In such situation, the residual life $\mathscr{C}_t=\{x:x>t\}$ is the set of interest. Therefore, the concept of FCRE for the residual lifetime ditribution known as dynamic fractional cumulative residual entropy (DFCRE) is required.
The dynamic fractional cumulative reidual entropy (DFCRE) function corresponding to \ref{1} is,
\begin{equation}
\mathcal{E}_{\alpha}(X,t)=\int_{t}^{\infty}\frac{\bar{F}(x)}{\bar{F}(t)}\bigg{[}-\log \frac{\bar{F}(x)}{\bar{F}(t)} \bigg{]}^\alpha dx,  0 \leq \alpha \leq 1 , t>0.
\end{equation}
The quantile based dynamic fractional cumulative reidual entropy (Q-DFCRE) function is;
\begin{equation}\label{14}
\mathcal{E}_{\alpha}^Q(X,u) = \frac{1}{1 - u}\int_{u}^{1}(1 - p) \left(\log(1-u)-\log(1-p)\right)^\alpha q(p)dp.
\end{equation}
For $0\leq \alpha \leq 1$, $\mathcal{E}_{\alpha}^Q(X,u)$  provides the spectrum of fractional information confined in the conditional survival function about the predictability of an outcome of $X$ until $100(1-u)\%$ point of distribution. Further, when $u \rightarrow 0$, the Q-DFCRE becomes Q-FCRE.
\eqref{14} can be represented in terms of hazard quantile function as;
\begin{equation}
\mathcal{E}_{\alpha}^Q(X,u)=\frac{1}{1-u}\int_{u}^{1}[H(p)]^{-1}\left(\log(1-u)-\log(1-p)\right)^\alpha dp.
\end{equation}

\begin{ex}
\begin{enumerate}
\item For exponential distribution with hazard rate $\lambda$, $\mathcal{E}_{\alpha}^Q(X,u)=\frac{\Gamma(\alpha+1)}{\lambda}$.
\item For power distribution with QF $Q(u)=\beta u ^{\delta}$, $\beta>0, \delta>0$, we obtain $\mathcal{E}_{\alpha}^Q(X,u)=\frac{\beta\delta}{1-u}\int_{u}^{1}(1-p)(\log(1-u)-\log(1-p))^{\alpha} p^{\delta-1}dp$.
\item When $X$ follows the re-scaled beta distribution with QF $Q(u)=c[1-(1-u)^{\frac{1}{r}}]$, $r,c>0$, then $\mathcal{E}_{\alpha}^Q(X,u)=\frac{r^{\alpha}c(1-u)^\frac{1}{r}\Gamma(\alpha+1)}{(r+1)^{\alpha+1}}$.
\begin{figure}[H]
	\centering
	\includegraphics[width=0.9\linewidth]{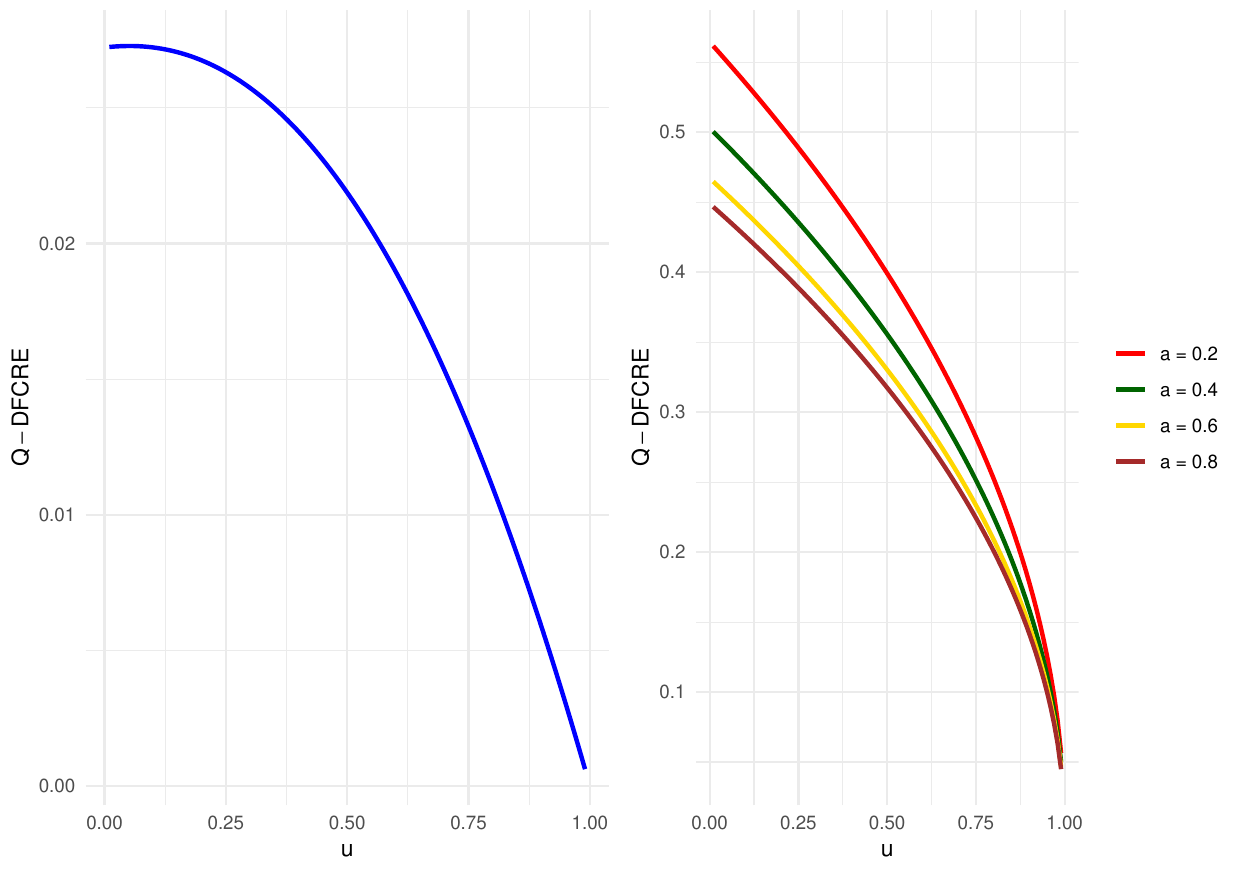}
	\caption{\textit{(a) Plot of Q-DFCRE for power distribution given in Example 2 for $\beta= 0.1, \delta=2.3, 
			\alpha=0.75$  (b) Graph of Q-DFCRE 
			for re-scaled beta distribution given in Example 3 for $c=2, r=2, \alpha=0.2, 0.4, 0.6, 0.8$. Here, we have taken the values of $u \in (0, 1)$ along the x-axis. }}
		\label{fig3}
\end{figure}
\end{enumerate}
\end{ex}

\begin{ex}
For the rv with linear mean residual QF, $Q(u)=-(a+b)\ln(1-u)-4u$ \citep{midhu2013class} that has no tractable distribution function, the corresponding Q-DFCRE becomes $\mathcal{E}_{\alpha}^{Q}(u)=(1-u)\Gamma(\alpha+1)\bigg[a+b-\frac{4}{2^{\alpha+1}}\bigg] $, where $0\leq\alpha\leq1$.
\end{ex}

Next, we introduce two nonparametric classes of life distributions based on Q-DFCRE.

\begin{defn}
A rv $X$ is having increasing (decreasing)  Q-DFCRE, $i.e.$, 
IQ-DFCRE (DQ-DFCRE) if $\mathcal{E}_{\alpha}^Q(u)$ is  increasing (decreasing) in $u \geq 0$. 
\end{defn}

Clealy, $\mathcal{E}_{\alpha}^{Q'}(u)\geq (\leq)0 \Longrightarrow \mathcal{E}_{\alpha}^Q(u) \geq (\leq)\frac{\alpha}{1-u}\int_{u}^{1}(\log (\frac{1-u}{1-p}))^{\alpha-1}dp$ according as $X$ is IQ-DFCRE (DQ-DFCRE).  For exponential distribution, $\mathcal{E}_{\alpha}^Q(u)$ is independent of $u$, implying that this distribution is the boundary of the IQ-DFCRE (DQ-DFCRE) classes.

%Since $Q(u)$ is an increasing function and $\mathcal{E}_{\alpha}^Q(u)=\mathcal{E}_{\alpha}(X;Q(u))$   IFCRE(DFCRE) is equivalent to increasing (decreasing) Q-FCRE. 
\begin{defn}
The rv $X$ has less dynamic fractional cumulative residual quantile entropy than the random variable $Y$ if, $\mathcal{E}_{\alpha}^{Q}(X,u) \leq \mathcal{E}_{\alpha}^{Q}(Y,u) $. We write $X \leq_{QDFCRE}Y$.
\end{defn}

Suppose $X$ and $Y$ be two exponentially distributed random variables with hazard rates $\lambda_1$ and $\lambda_2$.  Then $X\leq_{QDFCRE}Y$ for $\lambda_1\geq\lambda_2$.
\begin{thm}
Let $Q_X$ and $q_X$ respectively denote the QF and qdf of a rv $X$ and let $\zeta$ is a positive-valued increasing function. Then,
\begin{equation*}
	\mathcal{E}_{\alpha}^Q(\zeta(X), u) = \frac{1}{1-u}\int_{u}^{1}(1-p)\left(\log(1-u)-\log(1-p)\right)^\alpha q_X(p) \zeta'(Q_X(p))dp.
\end{equation*}
\end{thm}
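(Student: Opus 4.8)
The plan is to mirror, in the dynamic setting, the argument already used for the static case in Theorem~\ref{three}. First I would set $Y=\zeta(X)$ and write down the Q-DFCRE of $Y$ straight from the definition \eqref{14}, namely
\begin{equation*}
\mathcal{E}_{\alpha}^Q(\zeta(X),u)=\frac{1}{1-u}\int_{u}^{1}(1-p)\left(\log(1-u)-\log(1-p)\right)^\alpha q_Y(p)\,dp.
\end{equation*}
The only distribution-dependent quantity that changes when one passes from $X$ to $\zeta(X)$ is the quantile density, so the entire proof collapses to expressing $q_Y$ through $q_X$ and $\zeta$; the prefactor $(1-u)^{-1}$ and the weight $\left(\log(1-u)-\log(1-p)\right)^\alpha$ play no active role and are simply carried along.

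Next I would invoke the quantile-density relation already derived in \eqref{11}, which gives $q_Y(u)=q_X(u)\zeta'(Q_X(u))$ whenever $\zeta$ is increasing. This is precisely the identity exploited in Theorem~\ref{three}, and it rests on the order-preserving composition $Q_Y=\zeta\circ Q_X$, equivalently $\zeta^{-1}(Q_Y(u))=Q_X(u)$. Substituting this expression for $q_Y(p)$ into the display above yields the claimed formula immediately.

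Because the argument is a single substitution, there is no genuine analytic obstacle; the one point deserving care is the justification of \eqref{11} itself, i.e.\ that $\zeta(X)$ possesses a differentiable quantile function with $q_Y=q_X\cdot(\zeta'\circ Q_X)$. This is secured by the hypotheses that $\zeta$ is increasing (forcing $Q_Y=\zeta\circ Q_X$) and positive-valued with $\zeta'$ existing, exactly as in the static theorem. I would also remark that the monotonicity and positivity of $\zeta$ keep the integrand nonnegative, so the dynamic measure remains well defined; beyond this, the result is a direct consequence of \eqref{11} together with the definition \eqref{14}.
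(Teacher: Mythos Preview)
Your proposal is correct and matches the paper's approach exactly: the paper simply states that the proof is similar to that of Theorem~\ref{three}, which amounts to writing the Q-DFCRE of $Y=\zeta(X)$ from definition~\eqref{14} and substituting $q_Y(p)=q_X(p)\zeta'(Q_X(p))$ from~\eqref{11}. There is nothing to add.
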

\begin{proof}
	The proof is similar to that of the proof of the Theorem \ref{three}.
\end{proof}

\begin{thm}
For a non-negative rv $X$, consider a transformation $Y=\zeta(X)$, where $\zeta(.)$ is positive, real-valued, increasing and convex (concave) function.  Then, for $0<\alpha<1$, $Y$ is IQ-DFCRE (DQ-DFCRE) according as $X$ is IQ-DFCRE (DQ-DFCRE).
\end{thm}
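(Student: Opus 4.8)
The plan is to reduce the monotonicity of $u\mapsto\mathcal{E}_{\alpha}^{Q}(\zeta(X),u)$ to the sign of a single integral and then transfer that sign from $X$ to $\zeta(X)$ by exploiting the monotonicity of $p\mapsto\zeta'(Q_X(p))$. First I would invoke the preceding theorem to write
\[
\mathcal{E}_{\alpha}^{Q}(\zeta(X),u)=\frac{1}{1-u}\int_{u}^{1}(1-p)\left(\log\tfrac{1-u}{1-p}\right)^{\alpha}q_X(p)\,\zeta'(Q_X(p))\,dp,
\]
so the transformed entropy is the untransformed one with the positive factor $\zeta'(Q_X(p))$ inserted into the integrand.

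Next I would differentiate in $u$. Writing this as $(1-u)^{-1}A_\zeta(u)$ and applying Leibniz's rule, the boundary contribution at $p=u$ vanishes because $\left(\log\tfrac{1-u}{1-p}\right)^{\alpha}\to 0$ as $p\to u^{+}$ for $\alpha>0$; this is exactly where $0<\alpha$ is needed. A short simplification then gives, for each fixed $u$,
\[
\frac{d}{du}\mathcal{E}_{\alpha}^{Q}(\zeta(X),u)=\frac{1}{(1-u)^{2}}\int_{u}^{1}w(p)\,\phi(p)\,\zeta'(Q_X(p))\,dp,
\]
where $w(p)=(1-p)q_X(p)\left(\log\tfrac{1-u}{1-p}\right)^{\alpha-1}\ge 0$ and $\phi(p)=\log\tfrac{1-u}{1-p}-\alpha$. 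Taking $\zeta$ to be the identity recovers the criterion for $X$ itself: $X$ is IQ-DFCRE (DQ-DFCRE) precisely when $\int_{u}^{1}w(p)\phi(p)\,dp\ge 0$ ($\le 0$) for every $u$.

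The crux is that $\phi$ is increasing and changes sign exactly once: as $p$ moves from $u$ to $1$, $\log\tfrac{1-u}{1-p}$ increases from $0$ to $\infty$, so $\phi<0$ on $(u,p^{\ast})$ and $\phi>0$ on $(p^{\ast},1)$, with the single root $p^{\ast}=1-(1-u)e^{-\alpha}$. When $\zeta$ is increasing and convex, $\zeta'$ is non-negative and non-decreasing, hence $\zeta'(Q_X(p))$ is non-decreasing in $p$. I would then split, about this crossing point,
\[
\int_{u}^{1}w\,\phi\,\zeta'(Q_X(p))\,dp=\int_{u}^{1}w\,\phi\,\bigl[\zeta'(Q_X(p))-\zeta'(Q_X(p^{\ast}))\bigr]\,dp+\zeta'(Q_X(p^{\ast}))\int_{u}^{1}w\,\phi\,dp.
\]
On $(u,p^{\ast})$ both $\phi$ and the bracket are $\le 0$, and on $(p^{\ast},1)$ both are $\ge 0$; since $w\ge 0$, the first integrand is non-negative throughout, so the first term is $\ge 0$. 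The second term is $\ge 0$ whenever $X$ is IQ-DFCRE, because $\zeta'(Q_X(p^{\ast}))\ge 0$ and the trailing integral is then non-negative. Hence the derivative is non-negative and $\zeta(X)$ is IQ-DFCRE.

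For the concave case, $\zeta'(Q_X(\cdot))$ is non-increasing, so each of the two sign comparisons about $p^{\ast}$ reverses; combined with the assumption that $X$ is DQ-DFCRE, the same decomposition makes both terms $\le 0$ and yields a non-positive derivative, so $\zeta(X)$ is DQ-DFCRE. I expect the main obstacle to be not the differentiation but recognising the single-sign-change structure carried by $\phi$ and choosing its root $p^{\ast}$ as the pivot, since this is what aligns the monotonicity of $\zeta'(Q_X(\cdot))$ with the sign of $\phi$ and lets the sign of the $X$-criterion be promoted to the $\zeta(X)$-criterion.
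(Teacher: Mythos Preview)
Your proof is correct and, in fact, more self-contained than the paper's. The paper writes the same starting identity
\[
\mathcal{E}_{\alpha}^Q(\zeta(X),u)=\frac{1}{1-u}\int_{u}^{1}(1-p)\bigl(\log(1-u)-\log(1-p)\bigr)^\alpha q_{X}(p)\,\zeta'(Q_{X}(p))\,dp,
\]
observes that $\zeta'(Q_X(\cdot))$ is non-negative and monotone, and then simply invokes Lemma~3.1 of \cite{nanda2014renyi} as a black box to conclude the monotonicity of $\mathcal{E}_{\alpha}^Q(\zeta(X),u)$ from that of $\mathcal{E}_{\alpha}^Q(X,u)$. You instead differentiate explicitly, identify the single-crossing structure of $\phi(p)=\log\tfrac{1-u}{1-p}-\alpha$, and use the pivot-at-$p^\ast$ decomposition to transfer the sign of the $X$-integral to the $\zeta(X)$-integral. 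This is precisely the kind of argument that underlies the cited lemma, so the two routes are cousins rather than genuinely distinct; the paper's version is shorter but relies on an external reference, whereas yours is elementary and exposes exactly why the convex/concave hypothesis on $\zeta$ interacts with the sign change of $\phi$. Your computation of the derivative and the handling of the boundary term at $p=u$ (where $\alpha>0$ is used) are both correct.
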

\begin{proof}
	By virtue of the defintion of Q-DFCRE, we write
	\begin{eqnarray}
		\mathcal{E}_{\alpha}^Q(Y,u)&=&\frac{1}{1-u}\int_{u}^{1}(1-p)(\log(1-u)-\log(1-p))^\alpha q_{Y}(p)dp \nonumber\\
		& = & \frac{1}{1-u}\int_{u}^{1}(1-p)(\log(1-u)-\log(1-p))^\alpha q_{X}(p) \zeta'(Q_{X}(p))dp.
	\end{eqnarray}
	Since, $\zeta$ is nonnegative, increasing and convex (concave) we have $\zeta'(Q_{X}(u))$ is increasing (decreasing) and nonnegative. Hence by Lemma 3.1 of \cite{nanda2014renyi} $\mathcal{E}_{\alpha}^Q(Y,u)$ is increasing (decreasing) in $u$. Hence, for $0<\alpha<1$, $\mathcal{E}_{\alpha}^Q(Y,u)$ is increasing (decreasing) in $u$ whenever, $\mathcal{E}_{\alpha}^{Q}(X,u)$ is increasing (decreasing) in $u$.
\end{proof}

\begin{cor}
Let $Y=aX+b$, $a>0$, $b \geq 0$. If $X$ is IQ-DFCRE (DQ-DFCRE), then $Y$ is also IQ-DFCRE (DQ-DFCRE).
\end{cor}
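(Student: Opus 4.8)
The plan is to recognize that $Y=aX+b$ is precisely the special case $\zeta(x)=ax+b$ of the monotone transformation treated in Theorem 3.2, and then to invoke that theorem. First I would verify that the affine map $\zeta(x)=ax+b$ meets its hypotheses: since $a>0$ we have $\zeta'(x)=a>0$, so $\zeta$ is increasing; since $a>0$, $b\geq 0$, and $X$ is non-negative, $\zeta(X)=aX+b\geq b\geq 0$, so $\zeta$ is positive-valued; and since $\zeta''(x)=0$, the map is affine and hence \emph{simultaneously} convex and concave. This last observation is what makes both parenthetical cases of the corollary available from the single theorem.

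Reading $\zeta$ as convex, Theorem 3.2 yields that $Y$ is IQ-DFCRE whenever $X$ is IQ-DFCRE; reading the same $\zeta$ as concave, Theorem 3.2 yields that $Y$ is DQ-DFCRE whenever $X$ is DQ-DFCRE. Together these cover both statements and conclude the argument.

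Alternatively, and more transparently, I would compute $\mathcal{E}_{\alpha}^Q(Y,u)$ directly from Theorem 3.1. Substituting $\zeta'(Q_X(p))=a$, which is constant in $p$, pulls the factor $a$ outside the integral and, crucially, eliminates the shift $b$ entirely, giving
\[
\mathcal{E}_{\alpha}^Q(Y,u)=\frac{a}{1-u}\int_{u}^{1}(1-p)\left(\log(1-u)-\log(1-p)\right)^\alpha q_X(p)\,dp=a\,\mathcal{E}_{\alpha}^Q(X,u).
\]
Because $a>0$ is a positive constant, multiplication by $a$ preserves monotonicity, so $\mathcal{E}_{\alpha}^Q(Y,u)$ is increasing (decreasing) in $u$ if and only if $\mathcal{E}_{\alpha}^Q(X,u)$ is, which is exactly the claim.

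I do not expect any serious obstacle here; this is an immediate corollary. The only points requiring mild care are the positivity of $\zeta$ at the left endpoint (handled by $b\geq 0$ together with the non-negativity of $X$) and the remark that an affine function is both convex and concave, which is what lets both the IQ-DFCRE and DQ-DFCRE conclusions follow at once. The direct route via Theorem 3.1 is arguably the cleaner presentation, since the identity $\mathcal{E}_{\alpha}^Q(Y,u)=a\,\mathcal{E}_{\alpha}^Q(X,u)$ also re-exhibits the shift-invariance and scaling behaviour already recorded in Properties 2 and 3 of Section 2.1.
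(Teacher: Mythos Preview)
Your proposal is correct and matches the paper's approach: the corollary is stated without proof immediately after the convex/concave preservation theorem, and your observation that the affine map $\zeta(x)=ax+b$ is increasing, positive-valued, and simultaneously convex and concave is exactly the intended application. The alternative direct computation showing $\mathcal{E}_{\alpha}^Q(Y,u)=a\,\mathcal{E}_{\alpha}^Q(X,u)$ via Theorem~3.1 is also valid and arguably cleaner.
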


\begin{lem}
The stochastic ordering in quantile failure rate ($H_{X}(u)\geq(\leq)H_{Y}(u)$) does not preserve Q-DFCRE ordering.
\end{lem}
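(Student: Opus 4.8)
The plan is to prove this negative result by producing an explicit counterexample, since a single pair $(X,Y)$ whose hazard quantile comparison and Q-DFCRE comparison disagree is enough. Before constructing it I would first record \emph{why} the pair must be chosen with some care, because the naive guess is that the property \emph{does} transfer: if $H_X(u)\ge H_Y(u)$ were to hold for every $u\in(0,1)$, then exactly as in the theorem giving $X\le_{HQ}Y\Rightarrow X\le_{FCRQE}Y$ one gets $q_X(p)\le q_Y(p)$ for all $p$, and since the kernel $(1-p)\big(\log(1-u)-\log(1-p)\big)^{\alpha}$ is nonnegative on $(u,1)$, integrating and dividing by $1-u>0$ would force $\mathcal{E}_{\alpha}^{Q}(X,u)\le\mathcal{E}_{\alpha}^{Q}(Y,u)$ for \emph{every} $u$. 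The point of the lemma, therefore, is that $\mathcal{E}_{\alpha}^{Q}(\cdot,u)$ at level $u$ only senses the residual segment $(u,1)$, so what actually matters is the sign of $q_X-q_Y$ on that tail; the counterexample must use two variables whose quantile density functions (equivalently, hazard quantile functions) \emph{cross}, so that the tail comparison flips as $u$ moves.

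Concretely, I would take $Y$ exponential with rate $\lambda$, for which $\mathcal{E}_{\alpha}^{Q}(Y,u)=\Gamma(\alpha+1)/\lambda$ is constant in $u$ and $H_Y(u)\equiv\lambda$, and take $X$ rescaled beta with $Q_X(u)=c\big[1-(1-u)^{1/r}\big]$, for which the earlier example gives the closed form $\mathcal{E}_{\alpha}^{Q}(X,u)=\dfrac{r^{\alpha}c\,(1-u)^{1/r}\Gamma(\alpha+1)}{(r+1)^{\alpha+1}}$ and $H_X(u)=\frac{r}{c}(1-u)^{-1/r}$. Here $H_X$ is increasing from $r/c$ to $\infty$ while $H_Y$ is flat, so the two hazard quantile functions cross once, and correspondingly $q_X(p)\le q_Y(p)$ holds only for $p$ beyond a threshold $p^{\ast}=1-(r/(c\lambda))^{r}$. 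I would then compare the two closed forms: as $u\to 1^-$ one has $\mathcal{E}_{\alpha}^{Q}(X,u)\to 0<\Gamma(\alpha+1)/\lambda$, whereas at $u=0$ the inequality $\mathcal{E}_{\alpha}^{Q}(X,0)=\frac{r^{\alpha}c\,\Gamma(\alpha+1)}{(r+1)^{\alpha+1}}>\Gamma(\alpha+1)/\lambda$ can be arranged by taking $c$ large. Thus the two Q-DFCRE curves cross, so neither $X\le_{QDFCRE}Y$ nor $Y\le_{QDFCRE}X$ holds, which is precisely the asserted failure of preservation.

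The step I expect to be the main obstacle is not the final comparison, which is immediate once both Q-DFCRE functions are in closed form, but the conceptual one flagged in the first paragraph: making clear that the lemma is \emph{consistent} with the earlier positive theorems. I would therefore phrase the counterexample so as to emphasize that the hazard quantile functions are not globally ordered here (they intersect), and that it is exactly this crossing, invisible to a pointwise comparison at a single $u$ but decisive for the tail integral defining $\mathcal{E}_{\alpha}^{Q}(\cdot,u)$, that breaks the monotone transfer available in the static case. A secondary, purely technical check is to verify the claimed limiting values of $\mathcal{E}_{\alpha}^{Q}(X,u)$ as $u\to0^+$ and $u\to1^-$ and to fix an admissible parameter triple $(c,r,\lambda)$ for which the crossing is exhibited.
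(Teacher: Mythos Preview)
Your strategy is the same as the paper's: exhibit a pair with crossing hazard quantile functions and read off the failure. In fact your family strictly contains the paper's example, since the rescaled beta with $r=1$, $c=2$ is $U(0,2)$, and the paper simply takes $X\sim U(0,2)$ versus $Y\sim\mathrm{Exp}(1)$, giving $H_X(u)=\frac{1}{2(1-u)}$, $H_Y(u)\equiv 1$, $\mathcal{E}_{\alpha}^{Q}(X,u)=(1-u)\Gamma(\alpha+1)/2^{\alpha}$ and $\mathcal{E}_{\alpha}^{Q}(Y,u)=\Gamma(\alpha+1)$.

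Where you diverge is in the closing step, and there the paper's route is cleaner while yours has a small gap. The paper picks parameters for which the Q-DFCRE ordering is \emph{global}, $\mathcal{E}_{\alpha}^{Q}(X,u)\le\mathcal{E}_{\alpha}^{Q}(Y,u)$ for every $u$, whereas the hazard comparison flips at $u=\tfrac12$; hence on $(0,\tfrac12)$ one has $H_X(u)\le H_Y(u)$ together with $\mathcal{E}_{\alpha}^{Q}(X,u)\le\mathcal{E}_{\alpha}^{Q}(Y,u)$, which is exactly the non-preservation. You instead pick $c$ large so that \emph{both} curves cross, and conclude that neither $X\le_{QDFCRE}Y$ nor $Y\le_{QDFCRE}X$. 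But that conclusion does not by itself produce a $u$ at which the pointwise hazard comparison and the pointwise Q-DFCRE comparison disagree: at the only values you actually inspect ($u\to 0^{+}$ and $u\to 1^{-}$) they \emph{agree}. To finish your version you must show the two crossing points differ, i.e.\ compare $u^{\ast}=1-(r/(c\lambda))^{r}$ with the Q-DFCRE crossing $u^{\ast\ast}=1-\big((r+1)^{\alpha+1}/(r^{\alpha}c\lambda)\big)^{r}$ and observe $u^{\ast\ast}<u^{\ast}$, so that on $(u^{\ast\ast},u^{\ast})$ one has $H_X(u)<H_Y(u)$ while $\mathcal{E}_{\alpha}^{Q}(X,u)<\mathcal{E}_{\alpha}^{Q}(Y,u)$. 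Alternatively, simply adopt the paper's parameter choice ($r=1$, $c=2$, $\lambda=1$), which sidesteps this entirely.
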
 

For example, consider $X \sim U(0,2)$ with $Q_X(u)=2u$, $H_{X}(u)=\frac{1}{2(1-u)}$, and $\mathcal{E}_{\alpha}^{Q}(X,u)=(1-u)\frac{\Gamma(\alpha+1)}{2^\alpha}$; and $Y \sim Exp(1)$ with $Q_Y(u)=-\ln(1-u)$, $H_{Y}(u)=1$ such that $\mathcal{E}_{\alpha}^{Q}(Y,u)=\Gamma(\alpha+1)$. Clearly, $\mathcal{E}_{\alpha}^{Q}(X,u)\leq \mathcal{E}_{\alpha}^{Q}(Y,u)$ while $H_{X}(u)\geq(\leq)H_{Y}(u)$ for $\frac{1}{2} < u < 1(0<u<\frac{1}{2})$.\\

Next, we obtain a relationship between Q-DFCRE and dispersive orderings.
\begin{thm}
For the non-negative rvs $X$ and $Y$, $X \leq_{disp} Y \Longrightarrow X \leq_{QDFCRE}Y$.  
\end{thm}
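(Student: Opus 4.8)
The plan is to reduce the dispersive-ordering hypothesis to a pointwise comparison of the quantile density functions and then to exploit the non-negativity of the kernel appearing in the integral representation \eqref{14} of the Q-DFCRE. This keeps the argument entirely elementary, with no appeal to Jensen or log-sum type inequalities.

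First I would translate the hypothesis. By the definition of the dispersive order, $X \leq_{disp} Y$ means that $Q_Y(u) - Q_X(u)$ is increasing on $(0,1)$. Since both quantile functions are differentiable in the present framework, differentiating gives $q_Y(u) - q_X(u) \geq 0$, that is,
\[
q_X(u) \leq q_Y(u), \qquad u \in (0,1).
\]
This pointwise inequality is the only consequence of the dispersive order that I will use.

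Next I would isolate the kernel
\[
K(u,p) = \frac{1-p}{1-u}\left(\log(1-u)-\log(1-p)\right)^{\alpha}
\]
from \eqref{14} and check its sign. For a fixed $u$ and $u < p < 1$ we have $1-u \geq 1-p > 0$, so $\log\frac{1-u}{1-p} \geq 0$; raising a non-negative number to the power $\alpha \in [0,1]$ keeps it non-negative, and the prefactor $(1-p)/(1-u)$ is strictly positive. Hence $K(u,p) \geq 0$ throughout the region of integration.

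Finally I would conclude by monotonicity of the integral. Writing
\[
\mathcal{E}_{\alpha}^{Q}(X,u) = \int_{u}^{1} K(u,p)\, q_X(p)\, dp, \qquad \mathcal{E}_{\alpha}^{Q}(Y,u) = \int_{u}^{1} K(u,p)\, q_Y(p)\, dp,
\]
the inequality $q_X(p) \leq q_Y(p)$ together with $K(u,p) \geq 0$ yields $\mathcal{E}_{\alpha}^{Q}(X,u) \leq \mathcal{E}_{\alpha}^{Q}(Y,u)$ for every $u \in (0,1)$, which is exactly $X \leq_{QDFCRE} Y$. The only step that genuinely requires care—the \emph{hard part}, such as it is—is the sign verification of the kernel, in particular confirming that $\log(1-u)-\log(1-p)$ stays non-negative on $u<p<1$ so that the $\alpha$-power is well defined and non-negative; once this is secured the comparison is immediate.
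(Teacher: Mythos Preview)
Your argument is correct and follows essentially the same route as the paper: reduce $X\leq_{disp}Y$ to the pointwise inequality $q_X(p)\leq q_Y(p)$ by differentiating $Q_Y-Q_X$, and then compare the two Q-DFCRE integrals. Your explicit verification that the kernel $K(u,p)$ is non-negative on $u<p<1$ is a welcome addition that the paper leaves implicit, but otherwise the proofs coincide.
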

\begin{proof}
	$X \leq_{disp} Y$ implies $Q_Y(u)-Q_X(u)$ is increasing with respect to $u \in (0,1)$ implying that $q_X(u) \leq q_Y(u)$. Therefore, we get
	\begin{eqnarray*}
		\frac{1}{1-u}&&\int_{u}^{1}(1-p)\left(\log(1-u)-\log(1-p)\right)^{\alpha}q_{X}(p)dp\\&&\leq  \frac{1}{1-u}\int_{u}^{1}(1-p)\left(\log(1-u)-\log(1-p)\right)^{\alpha}q_{Y}(p)dp.  
	\end{eqnarray*}
\end{proof}

\begin{thm}
Let $X$ and $Y$ be two rvs such that $X \leq _{QDFCRE} Y$. Then for a non-negative increasing convex function $\zeta$, $\zeta(X)\leq_{QDFCRE} \zeta(Y)$.
\end{thm}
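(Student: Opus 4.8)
The plan is to translate the conclusion into a single family of integral inequalities using the dynamic transformation identity proved just above, and then to transfer the assumed ordering through the weight induced by $\zeta$. First I would apply that identity to both sides, writing
\begin{equation*}
\mathcal{E}_{\alpha}^{Q}(\zeta(X),u)=\frac{1}{1-u}\int_{u}^{1}(1-p)\big(\log(1-u)-\log(1-p)\big)^{\alpha}q_X(p)\,\zeta'(Q_X(p))\,dp,
\end{equation*}
and the analogue for $\zeta(Y)$ with $(q_X,Q_X)$ replaced by $(q_Y,Q_Y)$. By \eqref{11} these are exactly the Q-DFCRE of the random variables with quantile density functions $q_{\zeta(X)}=q_X\,\zeta'(Q_X)$ and $q_{\zeta(Y)}=q_Y\,\zeta'(Q_Y)$. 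Hence the target $\zeta(X)\leq_{QDFCRE}\zeta(Y)$ is the assertion that, for every $u\in(0,1)$, the integral carrying the weight $\zeta'(Q_X(p))$ is dominated by the integral carrying the weight $\zeta'(Q_Y(p))$.

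Next I would extract the structural facts furnished by the hypotheses on $\zeta$. Since $\zeta$ is increasing and convex, $\zeta'$ is non-negative and non-decreasing; because each quantile function is non-decreasing, the composite weights $p\mapsto\zeta'(Q_X(p))$ and $p\mapsto\zeta'(Q_Y(p))$ are non-negative and non-decreasing on $(0,1)$, while the kernel $(1-p)\big(\log(1-u)-\log(1-p)\big)^{\alpha}$ is non-negative on $(u,1)$. The assumption $X\leq_{QDFCRE}Y$ supplies precisely the unweighted version (weight $\equiv 1$) of the desired inequality, at every level $u$.

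The step I expect to be the main obstacle is transferring this unweighted ordering to the weighted one, the difficulty being that the weight is evaluated at $Q_X$ on the left but at $Q_Y$ on the right, so the two integrands carry different multipliers and no common factor can be pulled out; moreover the Q-DFCRE kernel is tied to the lower limit $u$, so a plain integration by parts in $p$ cannot invoke the hypothesis at levels other than $u$. The cleanest way I see to close this gap is to strengthen the pointwise control already present in the paper's toolkit: if one also has $q_X\leq q_Y$ and $Q_X\leq Q_Y$ on $(0,1)$ (for instance via the dispersive order together with the usual stochastic order, the former being exactly the ingredient used in the dispersive-ordering theorem above), then monotonicity of $\zeta'$ gives $\zeta'(Q_X(p))\leq\zeta'(Q_Y(p))$, and combining this with $0\leq q_X(p)\leq q_Y(p)$ yields the pointwise domination $q_X(p)\,\zeta'(Q_X(p))\leq q_Y(p)\,\zeta'(Q_Y(p))$; integrating the non-negative kernel against this pointwise bound then delivers the weighted inequality for every $u$, and hence $\zeta(X)\leq_{QDFCRE}\zeta(Y)$. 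Confirming that the intended hypotheses indeed supply this pointwise domination, rather than only the integrated one, is the crux of the argument and the place I would spend the most care.
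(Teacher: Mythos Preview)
Your plan mirrors the paper's proof almost exactly: both start from the dynamic transformation identity, rewrite the claim as the weighted inequality \eqref{17}, and then try to pass from the unweighted hypothesis \eqref{18} to the weighted conclusion by securing the pointwise bound $q_X(p)\,\zeta'(Q_X(p))\leq q_Y(p)\,\zeta'(Q_Y(p))$. The crux you single out is precisely where the paper's argument also lands.

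The gap you worry about is real, and the paper does not close it. From
\[
\int_{u}^{1}(1-p)\big(\log(1-u)-\log(1-p)\big)^{\alpha}\big(q_Y(p)-q_X(p)\big)\,dp\ \ge\ 0\quad\text{for every }u\in[0,1],
\]
one \emph{cannot} deduce $q_Y(p)\ge q_X(p)$ pointwise, yet the paper's final line effectively assumes this (and then the further step $q_Y>q_X\Rightarrow Q_Y>Q_X$, which needs an additional boundary comparison). So the chain $q_Y>q_X\Rightarrow Q_Y>Q_X\Rightarrow \zeta'(Q_Y)\ge\zeta'(Q_X)$ is asserted rather than derived from the QDFCRE hypothesis. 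Your instinct to supplement the hypothesis with a dispersive-type condition giving $q_X\le q_Y$ (and a stochastic-type condition giving $Q_X\le Q_Y$) is exactly the repair that makes the pointwise domination, and hence the weighted inequality, go through; without such an addition, neither your outline nor the paper's proof is complete as stated.
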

\begin{proof}
	It is enough to show that, $\forall$ $ u \in [0,1]$ 
	\begin{eqnarray}\label{17}
		\frac{1}{1-u}&&\int_{u}^{1}[(1-p)(\log(1-u)-\log(1-p))^\alpha q_{Y}(p) \phi'(Q_{Y}(p))]dp \nonumber \\
		&&\geq \frac{1}{1-u}\int_{u}^{1}[(1-p)(\log(1-u)-\log(1-p))^\alpha q_{X}(p) \zeta'(Q_{X}(p))]dp.
	\end{eqnarray}
	Since, $X \leq _{FCRQE}Y$, we have $\forall$ $u \in [0,1]$
	\begin{eqnarray}\label{18}
		\frac{1}{1-u}&&\int_{u}^{1}[(1-p)(\log(1-u)-\log(1-p))^\alpha q_{Y}(p)]dp  \nonumber
		\\  
		&&\geq  \frac{1}{1-u}\int_{u}^{1}[(1-p)(\log(1-u)-\log(1-p))^\alpha q_{X}(p)]dp \nonumber \\
		&&\Longrightarrow \int_{u}^{1}[(1-p)[\log(1-u)-\log(1-p)]^\alpha[q_{Y}(p)-q_{X}(p)]]dp\geq 0
	\end{eqnarray}
	Here, $(1-p)$ is always positive. $\log(1-u)-\log(1-p) >0 \; \text{for} \; p > u $ and $q_{Y}(p)-q_{X}(p)>0 \Longrightarrow q_{Y}(p)>q_{X}(p) \Longrightarrow Q_{Y}(p)>Q_{X}(p)\Longrightarrow \zeta'(Q_{Y}(p))>\zeta'(Q_{X}(p))$, proves the result. 
\end{proof}

\section{Nonparametric estimation of Q-FCRE}

In this section, a nonparametric estimator for Q-FCRE is constructed. Let us consider the lifetime of $n$ iid components represented as $X_1, X_2,\ldots, X_n$  with a common cdf $F(x)$ and QF $Q(u)$. Let $X_{1:n},X_{2:n},\ldots,X_{n:n}$ denote the respective order statistics. \cite{parzen1979nonparametric} introduced an empirical quantile function
\begin{equation*}
\bar{Q}(u)=X_{k:n},\hspace{0.2cm}\frac{k-1}{n}<u<\frac{k}{n},\hspace{0.2cm}k=1,2,\ldots,n,
\end{equation*}
which is a step function with jump $\frac{1}{n}$. A smoothed version of this estimator is given by,
\begin{equation*}
\bar{Q}_n(u)=n\bigg(\frac{k}{n}-u\bigg)X_{k-1:n}+n\bigg(u-\frac{k-1}{n}\bigg)X_{k:n}
\end{equation*}
for $\frac{k-1}{n}<u<\frac{k}{n}$, $k=1,2,\ldots,n$ (see \cite{ parzen1979nonparametric}). The corresponding estimator of the empirical quantile density function is
\begin{equation}\label{19}
\bar{q}_n{(u)}=\frac{d}{du}\bar{Q}_n(u)=n(X_{k:n}-X_{k-1:n}), \: \textrm{for} \:\frac{k-1}{n}<u<\frac{k}{n}.
\end{equation}
Using \eqref{19}, the non parametric estimator of Q-FCRE becomes
\begin{equation}\label{20}
\hat{\mathcal{E}}_{\alpha}^Q(X)=\int_{0}^ {1}(1-p)(-\log(1-p))^\alpha \bar{q}(p)dp.
\end{equation}
Now, by approximating integral to summation in \eqref{20}, the plug-in estimator of Q-FCRE is
\begin{equation}\label{21}
\hat{\mathcal{E}}_{\alpha}^Q(X)=\sum_{i=1}^{n}(1-\hat{F}(X_{i:n}))(-\log(1-\hat{F}(X_{i:n})))^{\alpha}n(X_{i:n}-X_{i-1:n})(S_{i:n}-S_{i-1:n})
\end{equation}
where $\hat{F}(X_{i:n})$ is the empirical distribution function and $S_{i:n}$ is defined by
\[S_{i:n}
=
\begin{cases}
0, i=0  \\
F(X_{i:n})=\frac{i}{n}, i=1,2,3,\ldots,n-1 \\
1,i=n
\end{cases}
\]
Hence, the simplified expression of \eqref{21} is
\begin{equation}\label{fcre est}
\hat{\mathcal{E}}_{\alpha}^Q(X)= \sum_{i=1}^{n-1}\bigg(1-\frac{i}{n}\bigg)\bigg[-\log\bigg(1-\frac{i}{n}\bigg)\bigg]^\alpha(X_{i:n}-X_{i-1:n})
\end{equation}

\section{Simulation study}
Here, simulation studies are carried out to verify the performance of the non-parametric estimator $\hat{\mathcal{E}}_{\alpha}^Q(X)$ in $\eqref{fcre est}$. We have generated random samples of different
sizes from the power-Pareto distribution with quantile function
\begin{equation}\label{dvs dn}
Q(u)=Cu^{\lambda_1}(1-u)^{-\lambda_2}, C,\lambda_1,\lambda_2 >0.
\end{equation}
The model \eqref{dvs dn} also known as Davies distribution \citep{hankin2006new}, which is a flexible family
for right-skewed non-negative data giving a good approximation to the exponential, gamma, Weibull
and lognormal distributions. Further, when $\lambda_1 = 
\lambda_2 = \lambda$, it reduces to the log-logistic distribution. We have computed the estimated values of the Q-FCRE along with its, bias, and MSE for $\alpha=0.25$ and $\alpha=0.5$ (see Tables \ref{Table 1} and \ref{Table 2}). It is clear that both the bias and MSE decrease with an increase in the sample size $n$, validating the aymptotic property of the estimator.

\begin{table}[H]
\centering
\caption{Bias and MSE of $\hat{\mathcal{E}}_{\alpha}^Q$ for power-Pareto distribution with $C=1.5,\lambda_1=2,\lambda_2=0.25$ at $\alpha=0.25$ and $\mathcal{E}_{\alpha}^Q=0.8235$}
\begin{tabular}{@{}p{3.5cm} p{3.5cm} p{3.5cm} p{3cm}@{}} % 4 columns with specified width
	\toprule
	n & $\hat{\mathcal{E}}_{\alpha}^Q$ & Bias & MSE \\ \midrule
	50 & 0.7136 & -0.1101 & 0.0275 \\
	100 & 0.7564 & -0.0671 & 0.0130\\
	250 &  0.7914 & -0.0321 & 0.0048 \\
	500 &  0.8042 & -0.0194 & 0.0023 \\
	1000 & 0.8133 &-0.0102 & 0.0011\\
	\bottomrule
\end{tabular}
\label{Table 1}
\end{table}

\begin{table}[H]
\centering
\caption{Bias and MSE of $\hat{\mathcal{E}}_{\alpha}^Q$ for power-Pareto distribution with $C=1.5,\lambda_1=2,\lambda_2=0.25$ at $\alpha=0.5$ and $\mathcal{E}_{\alpha}^Q=0.8548$}
\begin{tabular}{@{}p{3.5cm} p{3.5cm} p{3.5cm} p{3cm}@{}} % 4 columns with specified width
	\toprule
	n & $\hat{\mathcal{E}}_{\alpha}^Q$ & Bias & MSE \\ \midrule
	50 & 0.7203 &-0.1345 & 0.0343  \\
	100 & 0.7699 &-0.0848 &0.0164  \\
	250    & 0.8126 &-0.0422 &0.0061   \\
	500    & 0.8293 & -0.0254 & 0.0030 \\
	1000   & 0.8413 &-0.0135 & 0.0014 \\ \bottomrule
\end{tabular}
\label{Table 2}
\end{table}

To evaluate the further performance of the estimator we consider another quantile model namely the Govindarajulu $(\theta, \sigma, \beta)$ distribution that does not have a closed expression of distribution function but has a QF with the form
\begin{equation}\label{gov dn}
Q(u) = \theta +\sigma((\beta+1)u^{\beta}-\beta u^{\beta+1}),\hspace{0.2cm} \theta \in (-\infty,+\infty), \sigma, \beta > 0, \hspace{0.2cm} 0 \leq u \leq 1.
\end{equation}
The Govindarajulu distribution \eqref{gov dn} is a significant quantile model primarily used to depict
bathtub-shaped lifetime data. For a detailed study of \eqref{gov dn} we refer to \cite{nair2013quantile}. The bias and MSE of $\hat{\mathcal{E}}_\alpha^Q$ with respect to the true parameter values
of \eqref{gov dn} are given in Table \ref{Table 3} and Table \ref{Table 4}.
\begin{table}[H]
\centering
\caption{Bias and MSE of $\hat{\mathcal{E}}_{\alpha}^Q$ using Govindarajulu distribution with $\theta=1, \sigma=2, \beta=2 $ at $\alpha=0.75$ and $\mathcal{E}_{\alpha}^Q=0.6380$}
\begin{tabular}{@{}p{3.5cm} p{3.5cm} p{3.5cm} p{3cm}@{}} % 4 columns with specified width
	\toprule
	n & $\hat{\mathcal{E}}_{\alpha}^Q$ & Bias & MSE \\ \midrule
	75 & 0.6693 & 0.0314 & $1.4127 \times 10^{-3}$\\
	100 & 0.6638 & 0.0258 & $9.8400 \times 10^{-4}$\\
	250 & 0.6517  & 0.0138 & $3.1394 \times 10^{-4}$ \\
	500 & 0.6465 & 0.0086 & $1.3198\times 10 ^{-4}$ \\
	1000 &  0.6432 & 0.0053 & $5.7120 \times 10^{-5}$\\
	\bottomrule
\end{tabular}
\label{Table 3}
\end{table}

\begin{table}[H]
\centering
\caption{Bias and MSE of $\hat{\mathcal{E}}_{\alpha}^Q$ using Govindarajulu distribution with $\theta=1, \sigma=2, \beta=2 $ at $\alpha=0.85$ and $\mathcal{E}_{\alpha}^Q=0.6135$}
\begin{tabular}{@{}p{3.5cm} p{3.5cm} p{3.5cm} p{3cm}@{}} % 4 columns with specified width
	\toprule
	n & $\hat{\mathcal{E}}_{\alpha}^Q$ & Bias & MSE \\ \midrule
	75 & 0.6334 & 0.0199 & $8.5742\times 10^{-4}$  \\
	100 &  0.6293 & 0.0158 & $6.0706 \times 10^{-4}$  \\
	250    & 0.6211 & 0.0075 & $1.9553 \times 10^{-4}$   \\
	500    & 0.6180 & 0.0045 & $8.4540 \times 10^{-5}$ \\
	1000   & 0.6163 & 0.0027 & $4.1717 \times 10^{-5}$ \\ \bottomrule
\end{tabular}
\label{Table 4}
\end{table}
From Tables \ref{Table 3} and \ref{Table 4}, it is clear that both bias and MSE decrease with an increase in sample size, which further confirms the performance of the estimator $\hat{\mathcal{E}}_{\alpha}^Q$ based on Govindarajulu (1,2,2) model for fractional parameters $\alpha=0.75$ and $\alpha=0.85$.
\section{Validity of Q-FCRE}
In this section, we test the reliability of the proposed measure by conducting simulations on logistic map (\cite{may1976simple}), which is a mathematical model defined by the recurrence relation:
\begin{equation}
x_{t+1}=ax_t(1-x_t), 
\end{equation}
where $x \in [0,1]$ and $a$ is a parameter that controls the dynamics of the map known as control parameter which lies in  [0,4]. For $a > 3$ the system shows chaotic behaviour and for $a<3$ it shows periodic, stable behaviour. Here, we had taken the initial value of $x_0$ as 0.1. The parameter $a$ is chosen as 1, 1.5, 2, 2.5, 3, 3.5 and 4.
\begin{figure}[H]
\centering
\includegraphics[width=0.7\linewidth]{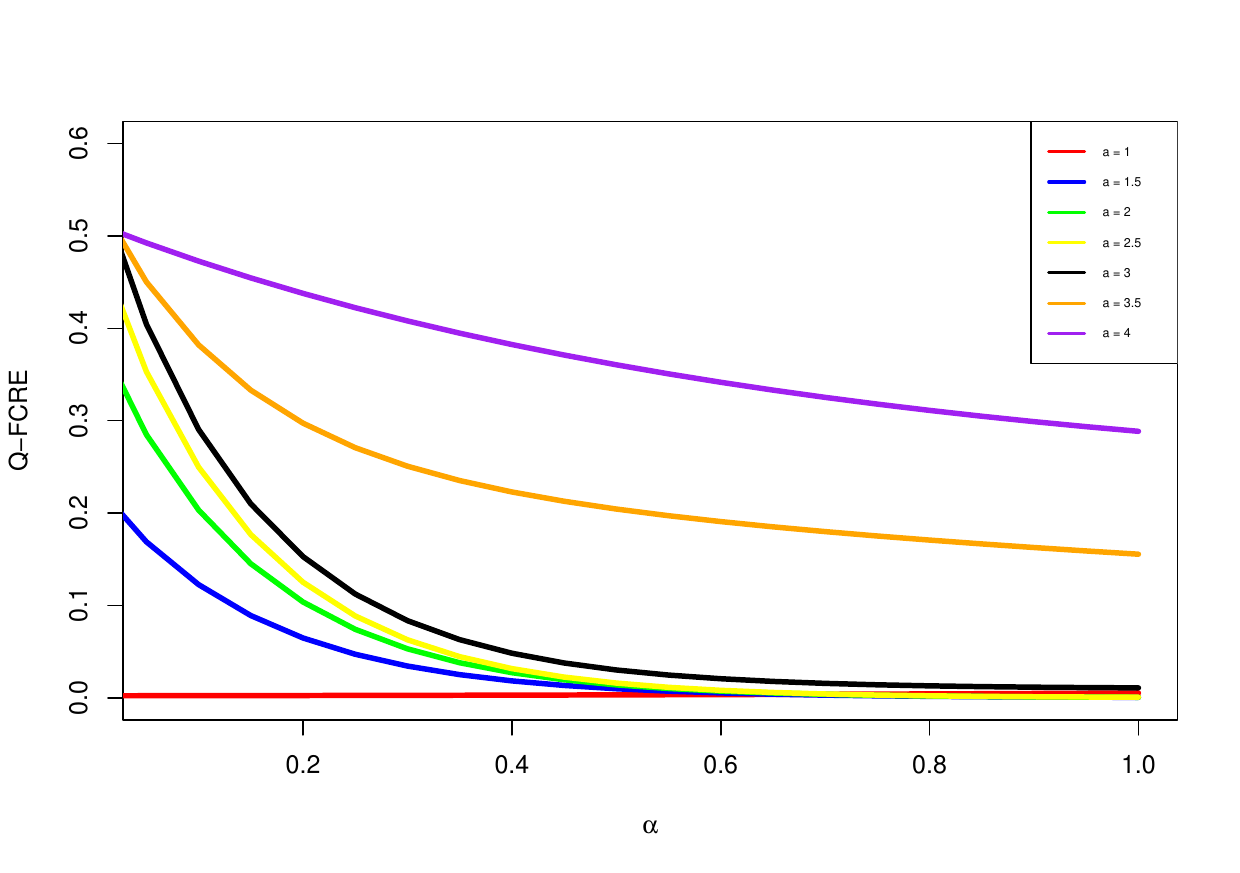}
\caption{\textit{Q-FCRE of logistic map with varied parameters $a$, for the sample size $n=2000$.}}  
\label{fig4}
\end{figure}

It can be seen from Figure \ref{fig4} that the Q-FCRE of logistic map can properly characterize the difference of uncertainty between chaotic and periodic series. For the logistic map exhibiting chaotic
behavior ($i.e.$, $a = 3, 3.5$ and $4$), they show higher entropy values than periodic ones. Moreover, with increasing parameter $a$ the degree of irregularity increases. As expected, the higher values of the logistic map parameter $a$ correspond to more chaotic behavior, which results in higher entropy. Lower values of $a$ (such as 1 and 1.5) correspond to stable or periodic states with lower entropy.   Thus, the results obtained have demonstrated that the Q-FCRE is a valid measure of uncertainty, 
showing its potential in applications of complex systems that are chaotic in nature.

\section{Applications of Q-FCRE in the financial data}

This section applies Q-FCRE to the price returns of Dow Jones Industrial Average (DJIA) during the period from January 1, 2014 to December 31, 2019 (\cite{Yahoo}). The price return is defined as $r_t=\log (x_t)-\log (x_{t-1})$,  where $x_t$ is the closing price on a trading day $t$ . The price return $r_t$ is then transformed by $y_t=r_t-\min\{r_t\}_{t=1}^{T}$ to make the data values non-negative (Figure \ref{fig5}) and further calculate the Q-FCRE value (Figure \ref{fig6}).

\begin{figure}[H]
\centering
\includegraphics[width=0.75\linewidth]{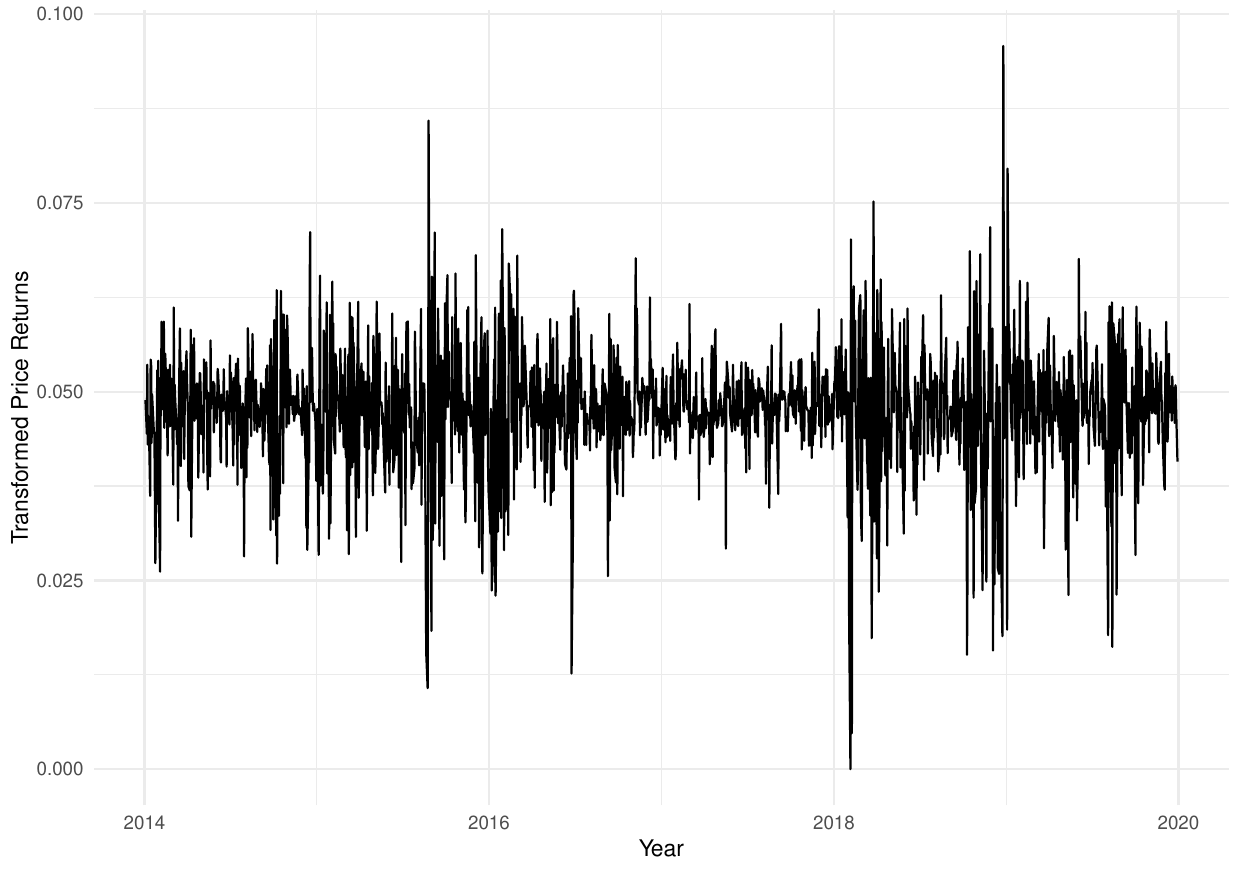}
\caption{\textit{Transformed price returns} }
\label{fig5}
\end{figure}
\begin{figure}[H]
\centering
\includegraphics[width=0.75\linewidth]{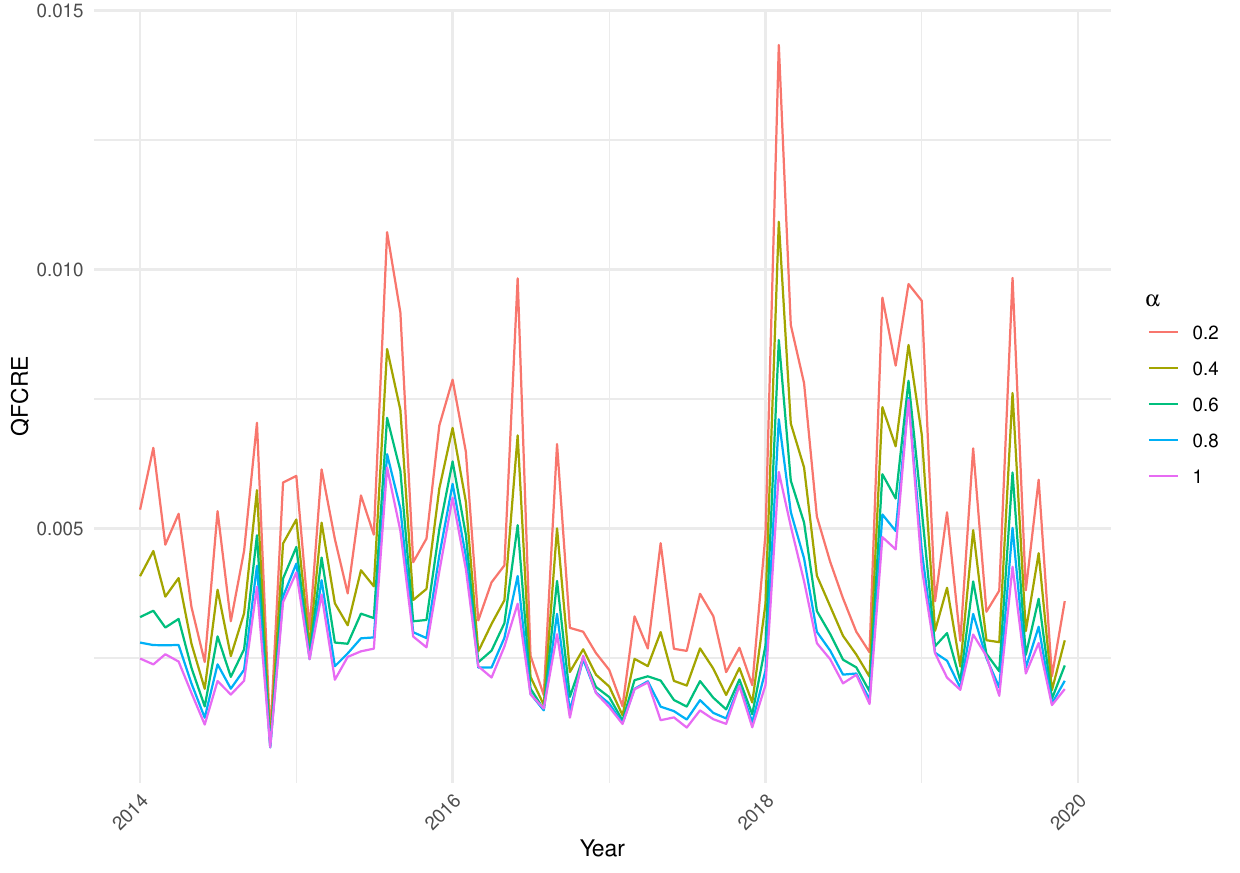}
\caption{\textit{Q-FCRE of DJIA dataset during the year 2014-2016} }
\label{fig6}
\end{figure}
Figure \ref{fig6} depicts the Q-FCRE of DJIA dataset with respect to various $\alpha$ values, $i.e.$, $\alpha=0.2, 0.4, 0.6, 0.8, 1$ during the time period 2014 to 2019.  As we can see, the Q-FCRE has a high sensitivity for $\alpha < 0.5$. On the other hand, when $\alpha$ is close to 1, the Q-FCRE decreases and, therefore, does not detect the financial instability in the data. Therefore, for the classic quantile-based CRE ($i.e.$, when $\alpha=1$), it is unable to provide as much information about the financial system as compared to the quantile based fractional CRE. From this perspective, we can consider that the Q-FCRE is superior to the classic Q-CRE. Considering the fractional parameter $\alpha$, the Q-FCRE can retrieve more inherent information carried by the underlying system, and thus detect dynamics more accurately.\\

Considering the DJIA data for the six years (2014-2019) in Figure \ref{fig6} also allow us to conclude that the financial instability/complexity in DJIA is more during the year 2018.

\begin{figure}[H]
\centering
\includegraphics[width=0.9\linewidth]{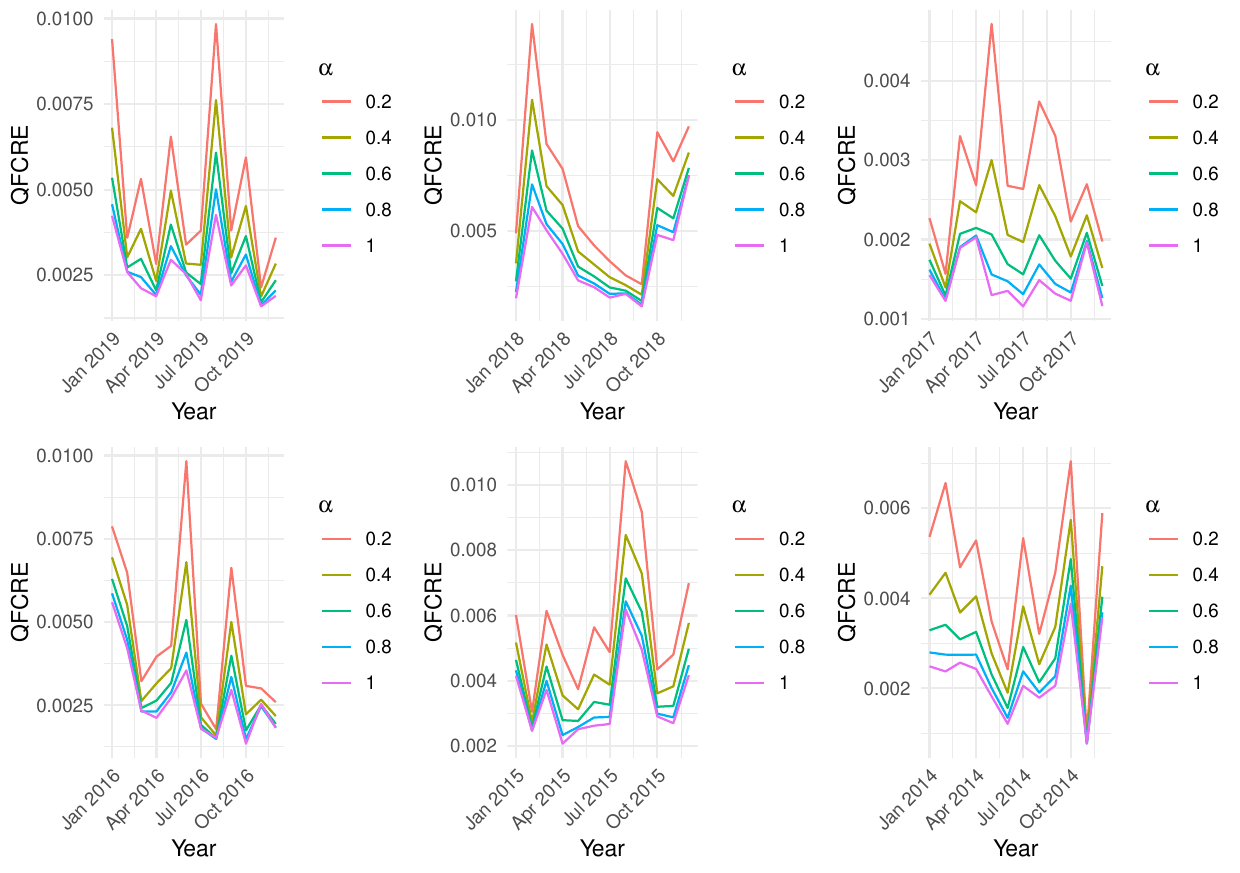}
\caption{\textit{Q-FCRE of DJIA dataset} }
\label{fig7}
\end{figure}
From Figure \ref{fig7}, it can be further elucidate that during the year 2018 the value of Q-FCRE transcended the value 0.010 for $\alpha=0.2$, hence showing high complexity in the financial data. For 2015 also the Q-FCRE value goes beyond 0.01 but does not show much complexity as compared to 2018.  However, for the rest of the years, the Q-FCRE is less than 0.01. Therefore, we can conclude that taking the years 2014-2019 into account, the DJIA data shows high financial instability during the year 2018.

\section{Concluding Remarks}
Quantile-based Fractional Cumulative Residual Entropy (Q-FCRE) provides a more dynamic, robust, and flexible measure of uncertainty, particularly for complex and evolving systems such as economic and financial markets. It excels in capturing long-term memory, nonstationary behavior, and persistent volatility, making it far more suitable for analyzing economic crises or systems with long-range dependencies than the more static and simplified Quantile-based Shannon Entropy (Q-SE).  Unlike the FCRE in the distributional framework, its quantile framework allows us to use some unique quantile models that do not have any closed-form distribution function.  The study obtained new theoretical results for Q-FCRE and its dynamic version.  The computation of the Q-FCRE for numerical data have been studied by proposing a nonparametric estimator, and validity of the estimator has also been evaluated using simulation studies.  The utility of Q-FCRE in analyzing complex financial data has been examined using the Dow Jones Industrial Average (DJIA) dataset. 

\section*{Data Availability Statement}
The data that support the findings of this study are openly available in \cite{Yahoo}.

\section*{Research ethics statement}
The data used in this study were obtained from the openly available in \cite{Yahoo} and ethical approval was not required.

\section*{Conflict of interest statement}
On behalf of all authors, the corresponding author states that there is no conflict of interest.

\section*{Acknowledgement}
The financial support from the INSPIRE, Department of Science and Technology, Government of India is gratefully acknowledged. 

\bibliographystyle{apalike}
\bibliography{ref} 

\begin{thebibliography}{}

\bibitem[Aswin et~al., 2023]{aswin2023reliability}
Aswin, I.~C., Sankaran, P.~G., and Sunoj, S.~M. (2023).
\newblock Reliability aspects of quantile-based residual coefficient of variation.
\newblock {\em International Journal of Reliability, Quality and Safety Engineering}, 30(02):2250021.

\bibitem[Gilchrist, 2000]{gilchrist2000statistical}
Gilchrist, W. (2000).
\newblock {\em Statistical Modelling with Quantile Functions}.
\newblock Chapman and Hall/CRC, New York.

\bibitem[Govindarajulu, 1977]{Govindarajula1977ACO}
Govindarajulu, Z. (1977).
\newblock A class of distributions useful in life testing and reliability with applications to nonparametric testing.
\newblock In {\em The Theory and Applications of Reliability with Emphasis on Bayesian and Nonparametric Methods}, pages 109--129. Elsevier.

\bibitem[Hankin and Lee, 2006]{hankin2006new}
Hankin, R. K.~S. and Lee, A. (2006).
\newblock A new family of non-negative distributions.
\newblock {\em Australian \& New Zealand Journal of Statistics}, 48(1):67--78.

\bibitem[Kayal and Balakrishnan, 2024]{kayal2024quantile}
Kayal, S. and Balakrishnan, N. (2024).
\newblock Quantile-based information generating functions and their properties and uses.
\newblock {\em Probability in the Engineering and Informational Sciences}, 38(4):733--751.

\bibitem[Krishnan et~al., 2020]{krishnan2020some}
Krishnan, A.~S., Sunoj, S.~M., and Nair, N.~U. (2020).
\newblock Some reliability properties of extropy for residual and past lifetime random variables.
\newblock {\em Journal of the Korean Statistical Society}, 49:457--474.

\bibitem[May, 1976]{may1976simple}
May, R.~M. (1976).
\newblock Simple mathematical models with very complicated dynamics.
\newblock {\em Nature}, 261(5560):459--467.

\bibitem[Midhu et~al., 2013]{midhu2013class}
Midhu, N.~N., Sankaran, P.~G., and Nair, N.~U. (2013).
\newblock A class of distributions with the linear mean residual quantile function and it’s generalizations.
\newblock {\em Statistical Methodology}, 15:1--24.

\bibitem[Nair and Sankaran, 2009]{nair2009quantile}
Nair, N.~U. and Sankaran, P.~G. (2009).
\newblock Quantile-based reliability analysis.
\newblock {\em Communications in Statistics—Theory and Methods}, 38(2):222--232.

\bibitem[Nair et~al., 2013]{nair2013quantile}
Nair, N.~U., Sankaran, P.~G., and Balakrishnan, N. (2013).
\newblock {\em Quantile-based Reliability Analysis}.
\newblock Birkhäuser New York, NY.

\bibitem[Nair et~al., 2012]{nair2012modelling}
Nair, N.~U., Sankaran, P.~G., and Kumar, B.~V. (2012).
\newblock Modelling lifetimes by quantile functions using {P}arzen's score function.
\newblock {\em Statistics}, 46(6):799--811.

\bibitem[Nair et~al., 2022]{nair2022reliability}
Nair, N.~U., Sunoj, S.~M., and Rajesh, G. (2022).
\newblock {\em Reliability Modelling with Information Measures}.
\newblock CRC Press, Boca Raton, FL.

\bibitem[Nanda et~al., 2014]{nanda2014renyi}
Nanda, A.~K., Sankaran, P.~G., and Sunoj, S.~M. (2014).
\newblock Renyi’s residual entropy: A quantile approach.
\newblock {\em Statistics \& Probability Letters}, 85:114--121.

\bibitem[Parzen, 1979]{parzen1979nonparametric}
Parzen, E. (1979).
\newblock Nonparametric statistical data modeling.
\newblock {\em Journal of the American Statistical Association}, 74(365):105--121.

\bibitem[Ramberg and Schmeiser, 1974]{ramberg1974approximate}
Ramberg, J.~S. and Schmeiser, B.~W. (1974).
\newblock An approximate method for generating asymmetric random variables.
\newblock {\em Communications of the ACM}, 17(2):78--82.

\bibitem[Rao et~al., 2004]{rao2004cumulative}
Rao, M., Chen, Y., Vemuri, B.~C., and Wang, F. (2004).
\newblock Cumulative residual entropy: a new measure of information.
\newblock {\em IEEE Transactions on Information Theory}, 50(6):1220--1228.

\bibitem[Sankaran and Sunoj, 2017]{sankaran2017quantile}
Sankaran, P.~G. and Sunoj, S.~M. (2017).
\newblock Quantile-based cumulative entropies.
\newblock {\em Communications in Statistics-Theory and Methods}, 46(2):805--814.

\bibitem[Shannon, 1948]{shannon1948mathematical}
Shannon, C.~E. (1948).
\newblock A mathematical theory of communication.
\newblock {\em The Bell System Technical Journal}, 27(3):379--423.

\bibitem[Ubriaco, 2009]{ubriaco2009entropies}
Ubriaco, M.~R. (2009).
\newblock Entropies based on fractional calculus.
\newblock {\em Physics Letters A}, 373(30):2516--2519.

\bibitem[Varkey and Haridas, 2023]{varkey2023review}
Varkey, A. and Haridas, H.~N. (2023).
\newblock A review on quantile functions, income distributions, and income inequality measures.
\newblock {\em Reliability: Theory \& Applications}, 18(2 (73)):50--62.

\bibitem[Vineshkumar and Nair, 2021]{vineshkumar2021inferring}
Vineshkumar, B. and Nair, N.~U. (2021).
\newblock Inferring association from reliability functions: An approach based on copulas.
\newblock {\em Brazilian Journal of Probability and Statistics}, 35(3):484--498.

\bibitem[Xiong et~al., 2019]{xiong2019fractional}
Xiong, H., Shang, P., and Zhang, Y. (2019).
\newblock Fractional cumulative residual entropy.
\newblock {\em Communications in Nonlinear Science and Numerical Simulation}, 78:104879.

\bibitem[Yahoo, 2024]{Yahoo}
Yahoo (2024).
\newblock \url{https://finance.yahoo.com/quote/^DJI/history}.

\end{thebibliography}

\end{document}